\newtheorem{thm}{Theorem}
\newtheorem{lem}{Lemma}
\newtheorem{cor}{Corollary}
\newcommand{\Z}{\mathbb{Z}}
\newcommand{\R}{\mathbb{R}}
\newcommand{\LL}{\mathbb{L}}
\newcommand{\eps}{\epsilon}
\newcommand{\om}{\omega}
\newcommand{\G}{\mathcal{G}}
\newcommand{\Ge}{\G_{\eps}}
\newcommand{\Geh}{\hat{\G}_{\eps}}
\newcommand{\GP}{\mathbf{G}_P}
\newcommand{\gT}{\mathbf{T}}
\newcommand{\cE}{\mathcal{E}}
\newcommand{\cR}{\mathcal{R}}
\newcommand{\One}{\mathbf{1}}
\DeclareMathOperator{\vol}{vol}
\DeclareMathOperator{\proj}{Proj}
\DeclareMathOperator{\interior}{int}
\DeclareMathOperator{\card}{card}
\begin{document}
	\title{Fourier transforms of polytopes, solid angle sums, and discrete volume}
	
\makeatletter
\@namedef{subjclassname@2010}{\textup{2010} Mathematics Subject Classification}
\makeatother
\subjclass[2010]{primary: 52C10, secondary: 52C15, 52C17, 32A27}
\keywords{lattice points, solid angle, Poisson summation, Fourier transform, polytope, Bernoulli polynomial, discrete volume, face poset}

\author{Ricardo Diaz}
\address{School of Mathematical Sciences, University of Northern Colorado,
Ross 2239,
Campus Box 122,
501 20th Street,
Greeley, CO 80639}
\email{ricardo.diaz@unco.edu}

\author{Quang-Nhat Le}
\thanks{The second author has received partial funding from the European Research Council (ERC) under the European Union's Horizon 2020 research and innovation programme (grant agreement No [ERC StG 716424 - CASe])}
\address{Department of Mathematics, Brown University, 
Box 1917, 151 Thayer Street, Providence, RI 02912 and
Einstein Institute of Mathematics, Hebrew University of Jerusalem, Jerusalem 9190401, Israel}
\email{qnhatle@math.brown.edu, qnhatle@math.huji.ac.il}

\author{Sinai Robins}
\address{Instituto de Matematica e Estatistica, Universidade de S\~ao Paulo, Rua do Matao 1010, 05508-090 S\~ao Paulo, Brazil}
\email{sinai\_robins@brown.edu}

\address{Department of Mathematics, Brown University, 
	Box 1917, 151 Thayer Street, Providence, RI 02912}
\email{sinai\_robins@brown.edu}

\begin{abstract}
		Given a real closed polytope $P$, we first describe the Fourier transform of its indicator function by using iterations of Stokes' theorem.  We then use the ensuing Fourier transform formulations, together with the Poisson summation formula, to give a new algorithm to count  fractionally-weighted lattice points inside  the one-parameter family of all real dilates of  $P$.   The combinatorics of the face poset of $P$ plays a central role in the  description of the Fourier transform of $P$. 
		We also obtain a closed form for the codimension-1 coefficient that appears in an expansion of this sum in powers of  the real dilation parameter $t$.  This closed form generalizes some known results about the Macdonald solid-angle polynomial, which is the analogous expression traditionally obtained by requiring that $t$ assumes only integer values.
 Although most of the present methodology applies to all real polytopes, a particularly nice application is to the study of all real dilates of integer (and rational) polytopes.  	 
\end{abstract}

\maketitle

\section{Introduction}

	 It is a classical problem in the geometry of numbers \cite{henkschurmannroots} to find ways of counting the number of lattice points in a general convex body.   The number of lattice points within the body can be regarded as a discrete analog of the volume of the body.   For closed polytopes in $\R^d$, this lattice-point problem  has attracted the attention of mathematicians working in a variety of fields.   Number theorists have applied lattice-point counting inside symmetric bodies in $\R^d$ to get bounds on norms of ideals \cite{siegel},  algebraic geometers have used properties of toric varieties to analyze this problem \cite{danilov}, \cite{fulton}, \cite{pommersheim}, statisticians have used lattice-point counting to help them enumerate contingency tables \cite{diaconisgangoli}, and combinatorialists have used lattice point enumeration to analyze the enumerative geometry of polytopes \cite{Barany}, \cite{BeckBraunKoppeSavageZafeirakopoulos}, \cite{McMullen1}, \cite{postnikovpermutahedra}, \cite{Stanleybook}.

If $P$ is an integer polytope, Ehrhart showed that the integer dilates of $P$ have the property that the number of integer points in $tP$ is a polynomial in $t$ (the Ehrhart polynomial). 
Ehrhart showed moreover that this  polynomial satisfies a reciprocity law \cite{Ehrhart1, Ehrhart2}, \cite{Stanleyreciprocity}.
The Ehrhart polynomial is easy to define, but unfortunately it fails to behave additively under natural geometrical operations, in particular the  gluing and subdividing of polytopes.  To ensure additivity, lattice points located at lower-dimensional boundary faces are assigned fractional weights, such as weight $\frac{1}{2}$ on the codimension-one facets of $P$, and even smaller dihedral-angle weights on the faces of codimension two, and so on.  Macdonald showed that if $P$ is an integer polytope and if $t$ is an integer, then the  sum of these fractionally-weighted lattice points inside the dilated polytope $tP$  depends polynomially on $t$ \cite{macdonald1, macdonald2}.   This solid-angle polynomial has the pleasing and computationally efficient property that it is additive on polyhedra with disjoint interiors; and is superior to the Ehrhart polynomial as an approximation to the  volume of  $tP$ (identity \eqref{solidanglesum2} below).   McMullen \cite{McMullen1} and Lawrence  \cite{lawrence} have developed a general theory of such additive valuations for polytopes from a combinatorial perspective.
	
	Here we explore solid-angle polytope addition formulas from the Fourier-analytic perspective. The use of Fourier analysis to solve discrete enumeration problems has a lengthy tradition extending from Siegel's classical approach to the Geometry of Numbers \cite{siegel}, to the work of Barvinok \cite{barvinok1, barvinok2, barvinokpommersheim}, Randol \cite{Randol1, Randol2}, Skriganov \cite{Skriganov1, Skriganov2, Skriganov3}, Beck  \cite{JozsefBeck}, and Brion and Vergne \cite{brionvergne}, and Diaz and Robins \cite{diazrobins}.  We give a more detailed account of some of the related historical development of lattice point enumeration, and some relations to solid angles, in section \ref{remarks}. 
		
	The solid angle sum, taken over all lattice points in a  polytope, will be computed by  applying Poisson summation to smoothings of the indicator function of the polytope.  This method converts the solid-angle enumeration problem to a new summation problem: summing a damped version of the  Fourier transform of the indicator function of $P$  over a dual lattice in the ($d$-dimensional) frequency domain.  These lattice sums require careful treatment because they diverge  when the damping factors are ignored. As an additional complication,  the Fourier transform has  anisotropic behavior: it decays at different rates in different directions in the frequency domain. For generic values of the wave-vector in the frequency domain, the Fourier transform is described rather simply by an exponential-rational function whose  rate of decay  depends only on the dimension of the polytope, but somewhat frustratingly, the lattice over which the Poisson sum will be  evaluated  contains non-generic points where the rate of decay is slower than the generic case.  
	
	An exact description of the Fourier transform will be found by a chain of iterated  applications of Stokes' theorem, and with each iteration we gain one additional unit rate of decay. This chain of  iterations halts prematurely precisely when the wave-vector is non-generic, and is orthogonal to one of the faces of $P$.  We will use the graded structure of  the Fourier transform of the indicator function  to establish the quasi-polynomiality  of the solid-angle sum for all real dilates of a rational polytope 
(see \eqref{quasipolynomial} for the classical definition of a quasipolynomial and Theorem   \ref{thm:Struc1} for our extension).

The normalized {\bf solid angle} fraction that a  convex $d$-dimensional polytope $P$ subtends  at any point $x \in \R^d$ is defined  by 
$\om_P(x)=\lim_{\eps\to\infty}{     \frac{\vol(S_{d-1}(x,\eps) \cap P)}{\vol(S_{d-1}(x,\eps))}}$. 
	It measures the fraction of a small $(d-1)$-dimensional sphere $S_{d-1}(x,\eps))$, centered at $x$, that intersects the polytope $P$.  
	\footnote{Note that balls and spheres can be used interchangeably in this definition - the fractional weight is the same using either method.}
	
	 It follows from the definition that 
	 $0 \leq \om_P(x) \leq 1$ for all $x \in \R^d$, $\om_P(x) = 0$ when $x \notin P$, and 
	$\om_P(x) = 1$ when $x \in \interior(P)$.  When $x$ lies on a codimension-two face of $P$, for example, then    $\om_P(x)$ is the fractional dihedral angle subtended by $P$ at $x$.	 Macdonald defined, for each positive integer $t$, the finite sum 
\begin{equation} \label{solidanglesum1}
A_P(t) :=  \sum_{x\in \Z^d} \om_{tP}(x),
\end{equation}
where $tP$ is the $t$'th dilation of the polytope $P$.  
Using purely combinatorial methods,  Macdonald  showed  that for any integer polytope $P$, and for {\bf positive integer values}  of $t$,
 \begin{equation} \label{solidanglesum2}
A_P(t) = (\vol P)  t^d + a_{d-2} t^{d-2} + a_{d-4} t^{d-4} + \cdots +   
    \begin{cases}
      a_1 t & \text{if } \dim P \text{ is odd},\\
      a_2 t^2  & \text{if } \dim P \text{ is even},
    \end{cases}
\end{equation} 
 hence of course it is a polynomial function of $t$ \cite{macdonald1, macdonald2}.  It is important to realize that in \eqref{solidanglesum2},  $t$ is restricted to integer values. 
 This discrete approximation to the volume for $tP$, also called the 
{\bf solid angle sum}, resembles the Ehrhart polynomial for $P$, the latter being defined by $|tP \cap \Z^d|$. There are some elementary and sometimes useful relations between the two polynomials, which are given in section \ref{remarks}.  

One interesting feature of the theory developed here is that our main result offers a geometric description for each of the quasi-coefficients $a_j(t)$.  For example, it turns out (from Theorem \ref{thm:main}) that the codimension-2 term $a_{d-2}$
 in 
\eqref{solidanglesum2} is described precisely by considering all the chains in the face poset of $P$ that terminate in a codimension-2 face of $P$, together with certain explicit $2$-dimensional lattice sums (see \eqref{complicatedcoeff}) that are performed on the orthogonal $2$-dimensional lattice to each codimension-2 face. 

 Our objective is to extend Macdonald's results to all real values of the dilation parameter $t$, and obtain some explicit  formulas for any coefficient of the solid angle sum $A_P(t)$ in this more general setting.  We should also remark that  our new formulations for the quasi-coefficients can sometimes be computationally intensive, depending on the geometry of $P$.

 We first outline an intuitive approach  that glosses over the technical details.  The Poisson Summation Formula asserts that, for any rapidly decreasing function $f$ on $\R^d$,
\begin{equation}
\sum_{m  \in\Z^d}  f(m)=\sum_{\xi \in\Z^d}\hat{f}(\xi),
\end{equation}
where  the Fourier transform of $f$ is defined by
\begin{equation}
\hat f(\xi) :=  \int_{\R^d}  f(x) e^{-2\pi i \langle \xi, x \rangle} dx,
\end{equation}
the integration being with respect to the uniform Lebegue measure on $\R^d$.
Now apply Poisson summation to smoothed versions of the indicator function of a polytope, taking  
$f:= \One_P \ast \Ge$,  with  
$\Ge :=   \eps^{-d/2}e^{-\pi\|x\|^2/\eps}$ the usual one-parameter family of dilated Gaussians:

\begin{align}\label{intuition1}
		\sum_{x\in\Z^d} (\One_P \ast \Ge)(x) &= \sum_{\xi\in\Z^d} \widehat{(\One_P \ast \Ge)}(\xi)    
		  \\ 
			&= \sum_{\xi\in\Z^d} \hat{\One}_P(\xi) \Geh(\xi) \\ \label{rhs}
			&= \sum_{\xi\in\Z^d} \hat{\One}_P(\xi) e^{-\pi\eps\|\xi\|^2}.
\end{align}

It is straightforward  to  analyze the sum on the left. Pointwise, we will see that $\lim_{\epsilon \rightarrow 0} (\One_P \ast \Ge)(x) =   \omega_P(x)$ 
(see section \ref{sec:Fourier}), and in fact 
as  $\epsilon \rightarrow 0$  the left-hand-side of  \eqref{intuition1}  converges to the solid angle sum  taken over all lattice points in the polytope $P$.  The right-hand-side of \eqref{rhs} will be evaluated by analyzing  the rather delicate structure of the Fourier transform $\hat \One_P(\xi)$, which henceforth will be abbreviated to $\hat P (\xi)$ for ease of reading.  

\medskip
{\bf Acknowledgement}.   The third author is grateful for the partial support of FAPESP grant Proc. 2103 / 03447-6, Brazil, and for the support of ICERM, at Brown University.  The second author would like to thank to Karim Adiprasito, whose grant \textit{ERC StG 716424 - CASe} supports part of this work. He is grateful for the support of ICERM, at Brown University, and would like to express his deepest gratitude to Richard E. Schwartz for his encouragement on this project.

\section {Statements of results}

Stokes' Theorem will be used repeatedly to  express $\hat P (\xi)$ as a weighted linear combination of Fourier transforms of its lower-dimensional faces. First we sketch the essential ideas. Applying Stokes' theorem to the   vector-field  $\vec \xi e^{- 2\pi i \langle x, \xi \rangle}$, we obtain

$$\int_{x\in P} - 2\pi i ||\xi||^2  e^{- 2\pi i \langle x, \xi \rangle} dx = \int_{\partial P} e^{- 2\pi i \langle x, \xi \rangle} \vec \xi \cdot n \ d\sigma$$ where $d \sigma$ represents Hausdorff measure on the bounding facets of $P$ and $\vec n$ represents the outward-pointing unit normal vector to each facet. That is, for all nonzero $\xi$, 
$ \hat P(\xi) = \frac{1}{(- 2\pi i)} \sum_{F\subset \partial P} \frac{ \vec \xi \cdot n_F}{  ||\xi ||^2} \hat F(\xi)$
with the understanding that the integral that defines each $\hat F$ is taken with respect to Hausdorff measure that matches the dimension of the facet $F \subset \partial P$. 

Now iterate this identity, by replacing $P$ by $F$ and restricting the linear form $\phi(x) =\langle x, \xi \rangle $ to the lower-dimensional facet $F$, which can be accomplished by removing  the component of $\vec \xi$ that is parallel to $n_F$.  If the vector $\xi$ is chosen generically, so that the linear function $\phi(x)= \langle x, \xi \rangle$ is never constant on any face of $P$ of positive dimension, this method can be iterated downward through all lower-dimensional faces  until one finally reaches the vertices of $P$, at which stage the final contribution in the computation chain is simply an exponential factor evaluated at each vertex. Thus the generic situation is that for a polytope of dimension $d$, $\hat P(\xi)$ is a sum of products, each product  comprising  $d$ algebraic factors that are  homogeneous of degree $(-1)$ and  an exponential factor  evaluated at a vertex of the polytope.  The exceptional non-generic cases occur precisely when the chain breaks prematurely, because $\phi(x)$ is constant on a face. When this occurs, however, no further integration is required on such a face, because the integrand in  the Fourier integral is constant.
The essential steps  in this iterative process will now be written down in  greater detail, using more precise notation. 

\begin{thm}[Combinatorial Stokes Formula for $\hat P$] \label{thm:DSF}
		Let $F$ be a polytope in $\R^d$  that  is not just a point: its dimension satisfies $1 \leq \dim(F) \leq n$.  Let $\phi_{\xi}(x)=-2\pi\langle \xi,x \rangle$ denote the real linear phase function in the integral formula of the definition of $\hat{F}$. We denote by $\proj_{F} (\xi)$ the orthogonal projection of $\xi$, considered as a vector in $\mathbb R^d$, onto  $F$.   For each (codimension-one) facet $G\subset F$ let  $N_F(G)$ be the  unit normal vector to  $G$  that points out of $F$.  Then 
\medskip
\begin{itemize}
			\item[(i)] If $\proj_{F} (\xi) = 0$, then $\phi_{\xi}(x) = \Phi_{\xi}$ is constant on $F$, and
			\begin{equation}
			\hat{F} (\xi) = \vol(F) e^{i\Phi_{\xi}}.
			\end{equation}	
			\item[(ii)] If $\proj_{F} (\xi) \neq 0$, then
			\begin{equation}
			\hat{F} (\xi) = \frac{-1}{2 \pi i}  \sum_{G \in \partial F} 
			\frac{\langle \proj_{F} (\xi), N_F(G) \rangle}{\| \proj_{F} (\xi) \|^2} \hat{G} (\xi).
			\end{equation}
		\end{itemize}
\end{thm}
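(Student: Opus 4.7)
The plan is to treat the two cases separately: part (i) by a direct observation, and part (ii) by the divergence theorem applied inside the affine hull of $F$ to a cleverly chosen vector field. To set up notation, let $V_F$ denote the linear subspace of $\R^d$ parallel to the affine hull of $F$, and set $v := \proj_{F}(\xi) \in V_F$. For any $x, y \in F$ the difference $x-y$ lies in $V_F$, so $\langle \xi, x-y\rangle = \langle v, x-y\rangle$; equivalently, the tangential (intrinsic) gradient of $\phi_\xi|_F$ is the constant vector $-2\pi v$.

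For part (i), when $v = 0$ the preceding identity forces $\phi_\xi$ to be constant on $F$, with common value $\Phi_\xi$. Factoring $e^{i\Phi_\xi}$ out of the defining integral immediately yields $\hat F(\xi) = \vol(F)\, e^{i\Phi_\xi}$, where $\vol$ denotes $\dim(F)$-dimensional Hausdorff measure on $F$.

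For part (ii), assume $v \neq 0$ and introduce the $V_F$-valued vector field on $F$
$$ U(x) := \frac{-1}{2\pi i\,\|v\|^2}\, v\, e^{i\phi_\xi(x)}. $$
Using the tangential gradient identity $\nabla_F e^{i\phi_\xi(x)} = -2\pi i\, v\, e^{i\phi_\xi(x)}$, a direct computation shows that the intrinsic divergence satisfies $\nabla_F \cdot U(x) = e^{i\phi_\xi(x)}$, so $U$ is a vector-field primitive for the Fourier integrand of $\hat F(\xi)$. The divergence theorem applied inside the affine hull of $F$ then gives
$$ \hat F(\xi) \;=\; \int_F \nabla_F \cdot U\, dx \;=\; \sum_{G \in \partial F} \int_G \langle U, N_F(G)\rangle\, d\sigma \;=\; \frac{-1}{2\pi i} \sum_{G \in \partial F} \frac{\langle v, N_F(G)\rangle}{\|v\|^2}\, \hat G(\xi), $$
where each boundary integral is, by construction, the Fourier transform of the facet $G$ against the $(\dim F - 1)$-dimensional Hausdorff measure on $G$.

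The main technical point, rather than a deep obstacle, is the bookkeeping of intrinsic versus ambient geometric objects. One should view $F$ as a full-dimensional polytope inside its own affine hull (equivalently, isometrically identify that affine hull with $\R^{\dim F}$ via an orthonormal basis of $V_F$), so that the intrinsic divergence theorem is directly applicable; the unit outward normals $N_F(G)$ then lie inside $V_F$, and the resulting boundary integrals agree with the $\hat G(\xi)$ used elsewhere in the paper. Compactness of $F$ guarantees absolute convergence of every integral, so no analytic subtlety arises, and the two cases together cover all possibilities because the dichotomy $\proj_F(\xi) = 0$ versus $\proj_F(\xi) \neq 0$ is exhaustive.
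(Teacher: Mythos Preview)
Your proof is correct and follows essentially the same approach as the paper. The only cosmetic difference is packaging: the paper phrases the key identity as ``$e^{i\phi_\xi}$ is an eigenfunction of the intrinsic Laplacian on $F$ with eigenvalue $\lambda = -\|\mathrm{grad}_F\,\phi_\xi\|^2$'' and then applies the divergence theorem to $\tfrac{1}{\lambda}\,\mathrm{grad}_F\,e^{i\phi_\xi}$, whereas you write down that same vector field directly as your $U$ (indeed $U = \tfrac{1}{\lambda}\,\mathrm{grad}_F\,e^{i\phi_\xi}$) and check its divergence by hand; the remaining steps are identical.
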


	
\bigskip
We note that our combinatorial Stokes' formula bears some resemblance to Barvinok's version of a combinatorial Stokes' formula, which appeared in \cite{barvinok1}, but has some important differences (see section \ref{remarks} for more details).

In order to account for the entire  chain of terms that arise in the iterative computation of $\hat P$, we  first describe a book-keeping device, called the face-poset of $P$, that will be useful for us in organizing these terms.    We consider chains in the {\bf face poset} $G_P$, the poset of all faces of $P$, ordered by inclusion.  We emphasize that in this paper all chains are always rooted chains, where the root is $P$.  The only appearance of non-rooted chains are in the following definition. 
 If $G$ is a facet of $F$, we attach the following weight to any  (local) chain $(F,G)$, of length $1$,  in the face poset of $P$:

\begin{equation}\label{weight}
 W_{(F,G)}(\xi):=\frac{-1}{2 \pi i} \frac{\langle \proj_{F} (\xi), N_F(G) \rangle }{\| \proj_{F} (\xi) \|^2}. 
 \end{equation}

	Note that these weights are functions of $\xi$ rather than constants. Moreover, they are all homogeneous of degree $-1$.

%

Let $\mathbf{T}$ be any rooted chain in $\GP$, given by 
\[
T:= (P \to F_1 \to F_2, \dots, \to F_{k-1} \to F_k),
\]
so that by definition $\dim(F_j) = d-j$.
We define the {\bf admissible set} $S(\mathbf{T})$ of the rooted chain
 $\mathbf{T}$ to be the set of all vectors  $\xi\in \R^d$ that are orthogonal to the tangent space of $F_k$ but not  orthogonal to the tangent space of  $F_{k-1}$.    
Finally, we define the following weights associated to any such rooted chain $\mathbf{T}$:

\begin{figure}
\centering
     \includegraphics[width=.6\textwidth]{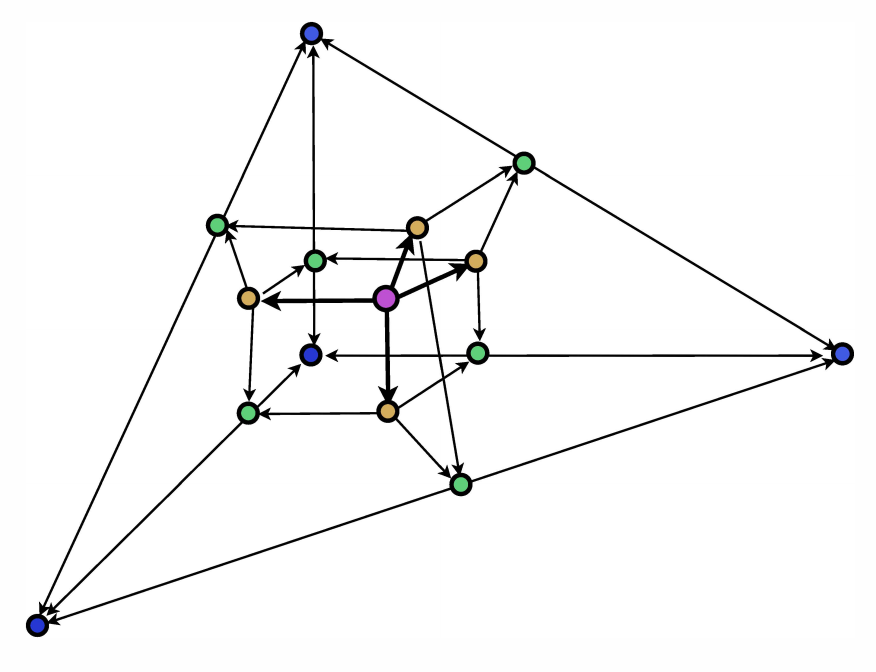}
     \caption{A symbolic depiction of the face poset $G_P$, here drawn as a suggestive directed graph.  We can see all  the rooted chains, beginning from a symbolic vertex in the center, marked with the color purple.  The rooted chains that terminate with the yellow vertices have length $1$, those that terminate with the green vertices have length $2$, and those that terminate with the blue vertices have length $3$. }
\end{figure}

\begin{enumerate}[(a)]
		\item   The rational weight $\mathcal{R}_{\mathbf{T}}(\xi) = \mathcal{R}_{(P \to ... \to F_{k-1} \to F_k)}(\xi)$ is defined to be the product of weights associated to all the rooted chains  $\mathbf{T}$
 of length $1$, times the Hausdorff volume of $F_k$ (the last node of the chain $\mathbf{T}$).  It is clear from this definition that $\mathcal{R}_{\mathbf{T}}(\xi)$ is a homogenous rational function of $\xi$.
		
\bigskip
\item     The exponential weight 
$\mathcal{E}_{\mathbf{T}}(\xi) = \mathcal{E}_{(P \to ... \to F_{k-1} \to F_k)}(\xi)$ 
is defined to be the evaluation of $e^{-2\pi i\langle\xi,x\rangle}$ at any point $x$ on the  face $F_k$:
\begin{equation} \label{exponential.weight}
\mathcal{E}_{\mathbf{T}}(\xi) := e^{-2\pi i\langle\xi,x_0\rangle},
\end{equation}
for any $x_0 \in F_k$.  We note that the inner product $\langle\xi,x_0 \rangle$ does not depend on the position of $x_0 \in F_k$.

\bigskip		
\item      The total weight of a rooted chain $T$ is defined to be

\begin{equation}
W_{\mathbf{T}}(\xi) = W_{(P \to ... \to F_{k-1}  \to F_k)}(\xi):= \mathcal{R}_{\mathbf{T}}(\xi) \mathcal{E}_{\mathbf{T}}(\xi) \mathbf{1}_{S(\mathbf{T})}(\xi), 
\end{equation}

\noindent
where $\mathbf{1}_{S(\mathbf{T})}(\xi)$ is the indicator function of the admissible set $S(\mathbf{T})$ of $\mathbf{T}$.

\end{enumerate} 
	\bigskip
	
\noindent
By repeated applications of the combinatorial Stokes' formula \ref{thm:DSF},  we can then describe the Fourier transform of $P$ as the sum of weights of all the rooted chains of $\GP$:
\begin{align}\label{ingredient1}
		\hat{P}(\xi) = \sum_{\gT} W_{\gT}(\xi) = \sum_{\gT} \cR_{\gT}(\xi) \cE_{\gT}(\xi) \One_{S(\gT)}(\xi).
\end{align}

\bigskip
The main result of this paper is the following explicit description for the coefficients of Macdonald's solid angle sum.	
	
\begin{thm} [Main Theorem]   \label{thm:main} 
		Let $P$ be a $d$-dimensional real polytope in $\R^d$, and let $t$ be a positive real number.
		Then we have
			\[ A_P(t) =\sum_{i = 0}^d a_i(t)t^i, \]
		where, for $0 \leq i \leq d$,
\begin{equation}\label{complicatedcoeff}
 a_i(t) := \lim_{\eps\to 0^+} \sum_{\xi\in\Z^d \cap S(\gT)} 
 \sum_{l(\gT) = d-i} \cR_{\gT}(\xi) \cE_{\gT}(t\xi) \  e^{-\pi\eps\|\xi\|^2},
 \end{equation}
\end{thm}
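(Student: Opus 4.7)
\smallskip
\noindent
\textbf{Proof plan.} The plan is to combine the Poisson summation framework sketched in the introduction with the Fourier-transform decomposition \eqref{ingredient1}. First I would apply Poisson summation to $\One_{tP}\ast\Ge$ to obtain
\[
\sum_{m\in\Z^d}(\One_{tP}\ast\Ge)(m)=\sum_{\xi\in\Z^d}\widehat{tP}(\xi)\,e^{-\pi\eps\|\xi\|^2}.
\]
As $\eps\to 0^+$, the pointwise convergence $(\One_{tP}\ast\Ge)(x)\to\om_{tP}(x)$ noted after \eqref{rhs}, together with standard Gaussian-decay estimates that dominate the sum over $\Z^d$, forces the left-hand side to converge to $A_P(t)$.

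Next, I would apply the decomposition \eqref{ingredient1} via the elementary scaling identity $\widehat{tP}(\xi)=t^d\hat P(t\xi)$. Since $\cR_{\gT}$ is a product of $l(\gT)$ weight factors, each homogeneous of degree $-1$ in $\xi$, multiplied by a Hausdorff volume that is independent of $\xi$, we have $\cR_{\gT}(t\xi)=t^{-l(\gT)}\cR_{\gT}(\xi)$; moreover, the admissible set $S(\gT)$ is a cone through the origin, so $\One_{S(\gT)}(t\xi)=\One_{S(\gT)}(\xi)$. Combining these observations gives
\[
\widehat{tP}(\xi)=\sum_{\gT}t^{\,d-l(\gT)}\,\cR_{\gT}(\xi)\,\cE_{\gT}(t\xi)\,\One_{S(\gT)}(\xi).
\]
Substituting into the Poisson identity, interchanging the \emph{finite} outer sum over chains in $G_P$ with the lattice sum over $\xi$, and grouping terms by $i:=d-l(\gT)\in\{0,\ldots,d\}$, produces the claimed expansion $A_P(t)=\sum_{i=0}^d a_i(t)\,t^i$ with coefficients given by \eqref{complicatedcoeff}.

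The principal technical obstacle is justifying the $\eps\to 0^+$ limit and its commutation with the finite sum indexed by chain-length. On the admissible set $S(\gT)$, the rational weight $\cR_{\gT}(\xi)$ is homogeneous of degree $-l(\gT)$, while $\Z^d\cap S(\gT)$ is at most a rank-$l(\gT)$ sublattice; consequently the undamped partial sums sit at the borderline of absolute convergence (logarithmically divergent in the worst case). The Gaussian regularizer $e^{-\pi\eps\|\xi\|^2}$ is therefore essential, and one must exploit the oscillation contributed by $\cE_{\gT}(t\xi)$---or alternatively perform partial summation within the lower-dimensional lattice $\Z^d\cap S(\gT)$---to establish that each individual limit exists and that passing to the limit commutes with the finite sum over chains of fixed length. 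Once this convergence analysis is in place, the algebraic manipulations above immediately deliver the claimed formula.
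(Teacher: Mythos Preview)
Your proposal is correct and follows essentially the same route as the paper's own proof: start from the Poisson-summation identity (equivalently, from \eqref{FourierSolidAngleSum}), insert the chain decomposition \eqref{ingredient1} of $\hat P$, exploit the degree-$(-l(\gT))$ homogeneity of $\cR_{\gT}$ together with the dilation-invariance of $S(\gT)$, and then group chains by length. In fact, your discussion of the convergence issue---whether the $\eps\to 0^+$ limit may be split into separate limits indexed by chain length---is more explicit than the paper's, which simply performs the algebraic regrouping inside a single limit and asserts the result.
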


\noindent
where $l(\gT)$ is the length of the rooted chain $\gT$ in the face poset of $P$, 
$\cR_{\gT}(\xi)$ is the rational function of $\xi$ defined above,  $\cE_{\gT}(t\xi) $ is the complex exponential defined in \eqref{exponential.weight} above, and 
$\Z^d \cap S(\gT)$ is the set of all integer points that are orthogonal to the last node in the chain $T$, but not to any of its previous nodes.

We call the coefficients $a_i(t)$ the {\bf quasi-coefficients} of the solid angle sum $A_P(t)$. 
As a consequence of the main Theorem \ref{thm:main}, it turns out that there is a closed form for the codimension-$1$ quasi-coefficient, which extends previous special cases of this coefficient. 

\begin{thm} \label{codim1coeff}
Let $P$ be any real polytope.  Then the codimension-1 quasi-coefficient of the solid angle sum $A_P(t)$ has the following closed form:
\begin{equation}
a_{d-1} (t) = 
\sum_{\substack{F \textup{ a facet of } P \\ with \ v_F \neq 0}}  \frac{\vol(F)}{\|v_F\|} 
\bar{B}_1 (\langle v_F, x_F \rangle t),
\end{equation}
where $v_F$ is the primitive integer vector which is an outward-pointing normal vector  to $F$.  Also,  $x_F$ is
here any point lying in the affine span of $F$.
\end{thm}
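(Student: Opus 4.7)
The plan is to derive the formula by specializing the Main Theorem to the case $i = d-1$. The rooted chains $\gT$ with $l(\gT) = 1$ are exactly the chains $(P \to F)$ where $F$ ranges over the facets of $P$, so the sum over chains collapses to a sum over facets. First I would compute the chain weights explicitly for such a length-one chain. Since $P$ is full-dimensional, $\proj_P(\xi) = \xi$, and the outward unit normal to $F$ is $N_P(F) = v_F/\|v_F\|$, so
\[
\cR_{(P\to F)}(\xi) = \frac{-\vol(F)}{2\pi i}\,\frac{\langle \xi, v_F\rangle}{\|v_F\|\,\|\xi\|^2},
\qquad
\cE_{(P\to F)}(t\xi) = e^{-2\pi i\,t\,\langle \xi, x_F\rangle}.
\]
The admissible set $S(P\to F)$ consists of $\xi \in \R^d$ orthogonal to the tangent space of $F$ (hence parallel to $v_F$) but not to the tangent space of $P$ (hence nonzero). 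When $F$ has a rational normal direction, primitivity of $v_F$ gives $\Z^d \cap S(P\to F) = \{k v_F : k \in \Z\setminus\{0\}\}$; otherwise this set is empty, which accounts exactly for the restriction ``$v_F \neq 0$'' in the statement.

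Next I would substitute $\xi = k v_F$ into the formula from Theorem \ref{thm:main}. The inner product simplifies as $\langle k v_F, v_F\rangle/\|k v_F\|^2 = 1/(k\|v_F\|^2 /\|v_F\|^2 k) = 1/k$, hence
\[
\cR_{(P\to F)}(k v_F) = \frac{-\vol(F)}{2\pi i\,\|v_F\|\,k}, \qquad
\cE_{(P\to F)}(t\,k v_F) = e^{-2\pi i\,t k\,\langle v_F, x_F\rangle}.
\]
Plugging these into \eqref{complicatedcoeff} and interchanging the finite sum over facets with the limit yields
\[
a_{d-1}(t) = \sum_{\substack{F \text{ facet}\\ v_F \neq 0}} \frac{\vol(F)}{\|v_F\|}\,\lim_{\eps\to 0^+} \sum_{k \neq 0} \frac{-1}{2\pi i\,k}\, e^{-2\pi i\,t k\,\langle v_F, x_F\rangle}\, e^{-\pi \eps k^2 \|v_F\|^2}.
\]
The inner sum is (with a sign convention on the sawtooth argument) the Gaussian-regularized Fourier series of the periodic Bernoulli function, using the classical identity
\[
\bar B_1(y) \;=\; -\frac{1}{2\pi i} \sum_{k\neq 0} \frac{e^{2\pi i k y}}{k},
\]
evaluated at $y = \langle v_F, x_F\rangle\,t$ (after the substitution $k \to -k$ or, equivalently, via $\bar B_1(-y) = -\bar B_1(y)$, matching the sign in the statement).

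The main obstacle is justifying the interchange of the limit $\eps \to 0^+$ with the summation and establishing that the Gaussian damping truly recovers $\bar B_1$, since the underlying series is only conditionally convergent on the unit circle. I would handle this by fixing a facet $F$, writing the Gaussian-regularized series as a convolution of $\bar B_1$ with a periodized Gaussian on $\R/\Z$, and using standard Abel/Tauberian summation: the convolution converges uniformly to $\bar B_1$ on compact subsets of the continuity set and to the average (zero) at the jump points where $t\langle v_F, x_F\rangle \in \Z$. The jump case is easily checked directly, since at such values $e^{-2\pi i t k \langle v_F, x_F\rangle} = 1$ and the sum $\sum_{k\neq 0} e^{-\pi \eps k^2\|v_F\|^2}/k$ is identically zero by the $k \leftrightarrow -k$ symmetry. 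Since only finitely many facets appear, the outer sum poses no difficulty, and assembling the factors yields the stated closed form.
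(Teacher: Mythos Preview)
Your setup is identical to the paper's: you specialize Theorem~\ref{thm:main} to chains of length one, compute the rational and exponential weights for $\xi=kv_F$, split off the facets with $v_F=0$, and reduce $a_{d-1}(t)$ to a sum over facets of the one-dimensional Gaussian-damped series
\[
\lim_{\eps\to 0^+}\sum_{k\neq 0}\frac{e^{-2\pi i\,h_F(t)\,k}}{-2\pi i\,k}\,e^{-\pi\eps\|v_F\|^2k^2}.
\]
The only genuine difference is in how this last limit is evaluated. The paper recognizes the summand as $\widehat{B_1\circ T_{h_F(t)}}(k)\cdot\Geh(k)$, applies Poisson summation \emph{in reverse} to rewrite the sum as $\sum_{n\in\Z}\bigl((B_1\circ T_{h_F(t)})\ast\Ge\bigr)(n)$, and then invokes a mild generalization of Lemma~\ref{lem:Lem1} and Theorem~\ref{thm:SAS1} (with $\One_P$ replaced by a continuous compactly supported function) to pass to the limit, obtaining $\bar B_1(h_F(t))$ after a short case check. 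You instead appeal directly to the classical Fourier expansion of $\bar B_1$ and justify the Gaussian regularization by an approximate-identity/Abel-summation argument on $\R/\Z$, treating the jump points by the $k\leftrightarrow -k$ symmetry. Both arguments are correct; yours is slightly more elementary and self-contained, while the paper's has the aesthetic advantage of recycling exactly the smoothing machinery already built in Section~\ref{sec:Fourier}.
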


We note that in particular, the formula in Theorem \ref{codim1coeff} shows that the quasi-coefficient  $a_{d-1}(t)$ is always a periodic function of $t$, with a period of $1$, for all real polytopes.
Although it is not true for a  general real polytope that the rest of the quasi-coefficients are periodic functions of $t$, it is true that they are periodic functions in the case of an integer (or rational) polytope, and for all real dilations  $t$, as Theorem \ref{thm:Struc1} below shows.


	The remainder of this paper is organized as follows. Section \ref{section.example} reviews the notation in the context of a simple and concrete example in $\R^2$.   In Section \ref{sec:Fourier}, we discuss how solid angle weights are related to the  Fourier analysis of the polytope and apply the Poisson Summation Formula to transform the solid-angle sum to a limiting sum in the frequency domain. Then, in section \ref{sec:Stokes}, we prove the combinatorial Stokes formula for $\hat P$, which exposes a stratification of the frequency lattice by linear subvarieties on which the Fourier transform of the polytope is representable as products of rational and exponential functions in the frequency vector. 
	In section \ref{ProofOfTheMainTheorem}, we give a proof of the Main Theorem \ref{thm:main}.  
In section  \ref{sec:codim1},  we prove Theorem \ref{codim1coeff}, which gives a closed form formula for the first non-trivial quasi-coefficient of the solid-angle sum, namely $a_{d-1}(t)$.  In section  \ref{sec:apps1} we find a periodicity result for the coefficients of the solid angle polynomial, as an application of the main Theorem.    Finally, in the last section we discuss some of the historical developments of Ehrhart polynomials and Macdonald's solid angle polynomials, and discuss some open problems.

\begin{figure} \label{posetpic}
     \centering
     \includegraphics[width=0.5\textwidth]{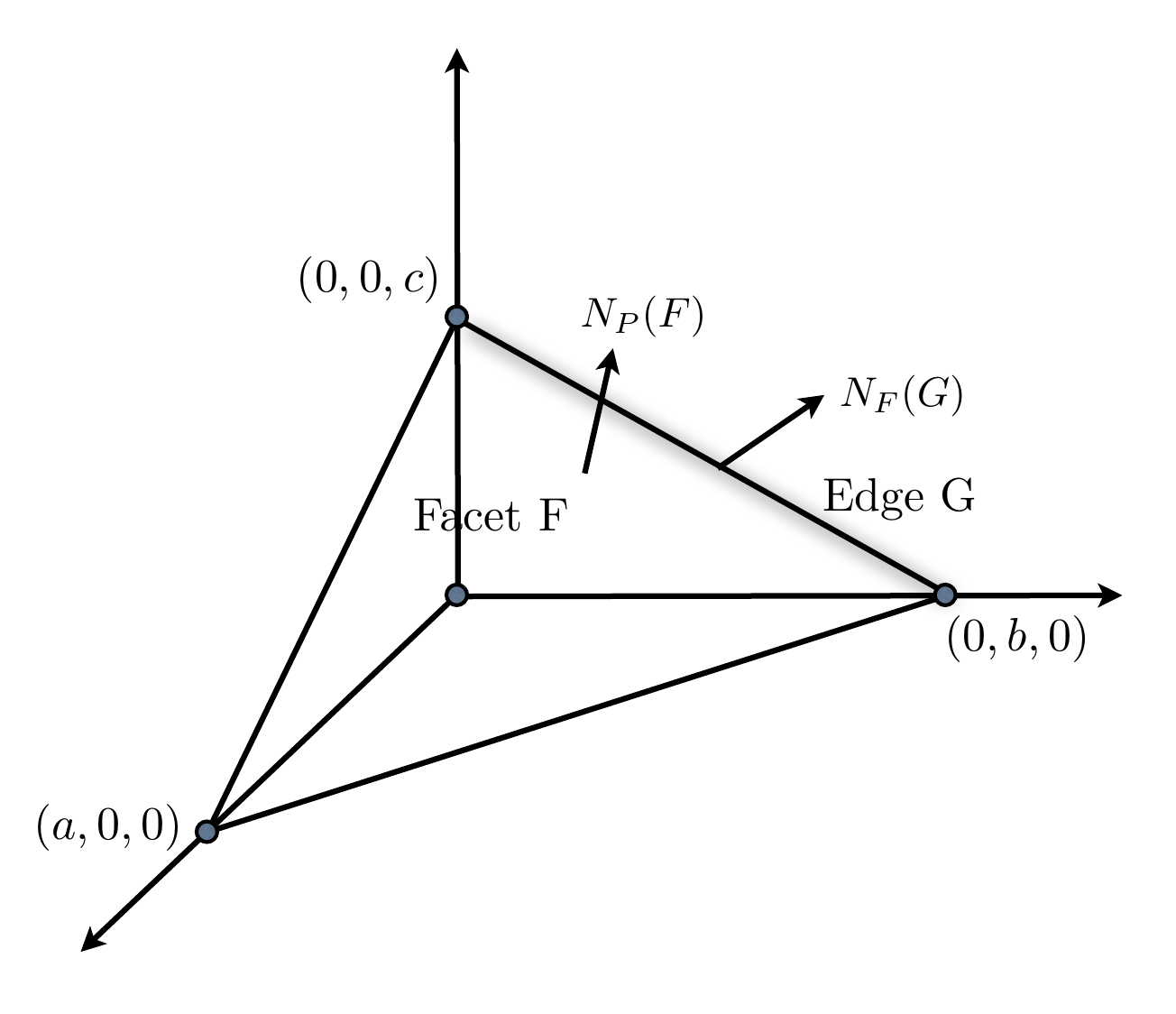}
     \caption{Here $P$ is a tetrahedron, 
        showing one instance of a  rooted     
         chain, defined here by 
         $T := (P~\to~F~\to~G)$, 
         with the normal vectors to each of the relevant faces of $P$}
\end{figure}

\bigskip
\section{An example in three dimensions} \label{section.example}

The following is a concrete example. Consider an integer tetrahedron in $\R^3$, depicted in Figure \ref{posetpic}, whose vertices are 
$(0,0,0), (a,0, 0)$, $(0,b,0)$, and $(0,0,c)$, where $a,b,c$ are integers.  For simplicity, we assume that $\gcd(a,b)= \gcd(a,c) = \gcd(b,c) = 1$.

Following the notation from the figure, the admissible set of vectors $\xi \in \R^3$ for the rooted chain $T:  P \rightarrow F \rightarrow G$ consists of the $2$-dimensional subspace spanned by $N_P(F)$ and $N_F(G)$, `minus'  the $1$-dimensional subspace spanned by $N_P(F)$.  We now compute explicitly the admissible set for the chain $T$.

  Since the equation of the plane that contains $F$  
  is  $\frac{x}{a} +\frac{y}{b}+\frac{z}{c} = 1$, we see that the primitive integer vector in the direction of $N_P(F)$ is  $(\frac{abc}{a}, \frac{abc}{b}, \frac{abc}{c}) = (bc, ac, ab) := v_1$.  Similarly, the primitive integer vector in the direction of $N_F(G)$ can be worked out easily, and is equal to $v_2:= ( a(c^2 + b^2), -bc^2, -b^2c )$.

Thus, for the rooted chain $T$, the admissible set $\Z^3 \cap S(T)$ consists of those integer points (a sublattice of $\Z^3$) contained in the $2$-dimensional subspace spanned by $N_P(F)$ and $N_F(G)$, `minus' the $1$-dimensional integer sublattice containing $N_P(F)$.

To summarize, we have the admissible set $\Z^3 \cap S(T) = \{   m  v_1 + n v_2  \mid    n\not=0, m,n \in \Z   \}$, 
where  $v_1:=  (bc, ac, ab)$ and $v_2:=  ( a(c^2 + b^2), -bc^2, -b^2c )$.

\bigskip

\section{Smoothing estimates for the solid-angle sum}\label{sec:Fourier}

	Let $\One_P$ denote the indicator function of the closed polytope $P$, so that $\One_P(x)=1$ if $x \in P$ and $\One_P(x)=0$ if $x \notin P$. The solid angle with respect to $P$ at any point $x$ in $\R^d$ can be expressed analytically as the convolution between the indicator function $\One_P$ and a Dirac sequence of mollifiers, as we show next.   Although virtually any rapidly decreasing smooth radial function whose total mass is $1$ could be used to construct the Dirac sequence, we have found it particularly convenient to use the $d$-dimensional heat kernels
		$$ \Ge(x)=\eps^{-d/2}e^{-\pi\|x\|^2/\eps} $$
	whose Fourier transform is
		$$ \Geh(\xi)=e^{-\eps\pi\|x\|^2}, $$
and whose normalizing factor $\eps^{-d/2}$ guarantees a total mass of $1$: $\int_{\R^d}\Ge(x)dx=1$. The convolutions of the indicator function $\One_P$ by the heat kernels $\Ge$ will be called the {\bf Gaussian smoothings} of $\One_P$.

\begin{lem} \label{lem:Lem1}
		Let $P$ be a full-dimensional polytope in $\R^d$. Then for each point $x\in\R^d$ 
\begin{equation}
 \lim_{\eps\to 0^+} (\One_P \ast \Ge)(x) = \om_P(x). 
 \end{equation}
\end{lem}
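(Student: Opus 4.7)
The plan is to perform a scaling change of variables so that the heat kernel becomes the standard unit Gaussian and the polytope's rescaled image carries all the $\eps$-dependence, then identify the limiting domain with the tangent cone of $P$ at $x$. Writing out the convolution and substituting $y = x + \sqrt{\eps}\,z$ gives
\begin{equation*}
(\One_P \ast \Ge)(x) = \int_{E_\eps} e^{-\pi\|z\|^2}\,dz, \qquad \text{where } E_\eps := \tfrac{1}{\sqrt{\eps}}(P - x).
\end{equation*}
I will then analyze the limit of the right-hand side in three cases according to the position of $x$ with respect to $P$.

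If $x \notin P$, closedness of $P$ gives $\delta := d(x,P) > 0$, so $E_\eps$ is disjoint from the ball of radius $\delta/\sqrt{\eps}$, and a standard Gaussian tail bound forces the integral to vanish as $\eps \to 0^+$, matching $\om_P(x) = 0$. If $x \in P$, the facts that $0 \in P-x$ and $P-x$ is convex imply that the dilates $\lambda(P-x)$ are nested increasing in $\lambda$; hence the family $\{E_\eps\}$ increases as $\eps \to 0^+$, with union the tangent cone $T_x(P) := \{v \in \R^d : x + tv \in P \text{ for some } t > 0\}$. Monotone convergence then yields
\begin{equation*}
\lim_{\eps \to 0^+}(\One_P \ast \Ge)(x) = \int_{T_x(P)} e^{-\pi\|z\|^2}\,dz.
\end{equation*}
Because $T_x(P)$ is a cone through the origin, polar coordinates decouple the radial and angular parts, and using $\int_{\R^d} e^{-\pi\|z\|^2}\,dz=1$ reduces the right-hand side to the normalized spherical measure $\vol_{d-1}(S_{d-1} \cap T_x(P))/\vol_{d-1}(S_{d-1})$.

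The remaining step, and the main technical obstacle, is to identify this normalized spherical measure with $\om_P(x)$ as defined by the shrinking-sphere limit. This follows from the fact that any polytope agrees with (the translate of) its tangent cone in a small enough neighborhood of $x$: for sufficiently small $r$, the sets $S_{d-1}(x,r) \cap P$ and $S_{d-1}(x,r) \cap (x + T_x(P))$ differ only by a set of zero $(d-1)$-dimensional Hausdorff measure, so their normalized surface measures on $S_{d-1}(x,r)$ coincide, and rescaling by $r$ produces exactly the cone-based ratio above. This closes the three cases and completes the proof.
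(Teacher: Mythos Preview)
Your argument is correct and follows essentially the same route as the paper: the same substitution $v = (y-x)/\sqrt{\eps}$ reduces the convolution to $\int_{\frac{1}{\sqrt{\eps}}(P-x)} \G_1(v)\,dv$, and then one identifies the limiting domain with the tangent cone at $x$. The paper's proof is terser---it simply asserts that $\frac{1}{\sqrt{\eps}}(P-x)$ ``tends to'' the cone and that $\int_K \G_1 = \om_K(0) = \om_P(x)$---whereas you supply the details it omits (the monotone-convergence justification via nestedness of the dilates when $x\in P$, the separate tail estimate when $x\notin P$, and the polar-coordinate reduction linking the Gaussian integral over a cone to the spherical-measure definition of $\om_P$).
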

	\begin{proof}
		We have
		\begin{align*}
			(\One_P \ast \Ge) (x) &= \int_{y\in \mathbb{R}^d} \One_P(y) \Ge(x-y) dy \\
								&= \int_{y\in P} \Ge(y-x) dy \\
								&= \int_{u\in P'} \Ge(u) du = \int_{\frac{1}{\sqrt{\eps}}P'} \G_1(v) dv.
		\end{align*}
		
		In the above, we make use of the evenness of $\Ge$ in the second equality. The substitutions $u = y-x$ and $v = u/\sqrt{\eps}$ are also used. Following those substitutions, we change the domain of integration from $P$ to the translation $P'=P- x$ (by the vector $-x$) and to the dilation of $P'$ by the factor $\frac{1}{\sqrt{\eps}}$. When $\eps$ approaches $0$, $\frac{1}{\sqrt{\eps}}P'$ tends to the cone $K$ at the origin subtended by $P'$; $K$ is in fact a translation of the tangent cone of $P$ at $x$. Thus, we arrive at
			\[ \lim_{\eps \to 0^+} (\One_P \ast \Ge) (x) = \int_{K} \G_1(v)dv = \om_K(0) = \om_P(x). \]
	\end{proof}
	
We can now transform the solid-angle sum of $P$ into an infinite sum over the integer lattice, which is
amenable to Fourier-analytic techniques.
	
\begin{thm} \label{thm:SAS1}
		Let $P$ be a full-dimensional polytope in $\R^d$. Then,
		\begin{equation} \label{eq:SAS1}
			A_P(1) = \lim_{\eps\to 0^+} \sum_{x\in\Z^d} (\One_P \ast \Ge)(x). 
		\end{equation}
\end{thm}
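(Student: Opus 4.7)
The plan is to justify interchanging the limit $\eps \to 0^+$ with the sum over $\Z^d$ on the right-hand side of \eqref{eq:SAS1}. Once this interchange is permitted, Lemma \ref{lem:Lem1} applied pointwise yields
\[
\lim_{\eps \to 0^+} \sum_{x\in\Z^d} (\One_P \ast \Ge)(x) \;=\; \sum_{x\in\Z^d} \om_P(x) \;=\; A_P(1),
\]
the last equality reducing to a finite sum, since $\om_P(x)$ vanishes outside the compact polytope $P$.

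To justify the interchange, the plan is to apply the dominated convergence theorem with respect to the counting measure on $\Z^d$. The key task is to construct a nonnegative function $g \colon \Z^d \to \R$ with $\sum_{x \in \Z^d} g(x) < \infty$ such that $(\One_P \ast \Ge)(x) \leq g(x)$ uniformly for all sufficiently small $\eps > 0$ and every $x \in \Z^d$. First I would isolate a bounded neighborhood of $P$, say $N := \{x \in \R^d : \operatorname{dist}(x, P) \leq 1\}$, which contains only finitely many lattice points. On $N \cap \Z^d$, the trivial estimate $(\One_P \ast \Ge)(x) \leq \|\One_P\|_\infty \,\|\Ge\|_{L^1} = 1$ furnishes a uniform bound.

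For the tail, that is for $x \in \Z^d$ with $\operatorname{dist}(x, P) > 1$, I would estimate
\[
(\One_P \ast \Ge)(x) \;=\; \int_P \Ge(x-y)\,dy \;\leq\; \vol(P)\,\eps^{-d/2}\, e^{-\pi\,\operatorname{dist}(x,P)^2/\eps},
\]
and, restricting to $\eps \in (0,1]$, split the right-hand side as $\bigl(\eps^{-d/2} e^{-\pi/\eps}\bigr) \cdot e^{-\pi(\operatorname{dist}(x,P)^2 - 1)/\eps}$. The first factor is bounded on $\eps \in (0,1]$, while the second is at most $e^{-\pi(\operatorname{dist}(x,P)^2 - 1)}$. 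This produces an $\eps$-independent Gaussian tail bound that is summable over the lattice, since $\#\{x \in \Z^d : \operatorname{dist}(x, P) \leq r\}$ grows only polynomially in $r$.

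The main obstacle here is precisely this uniform-in-$\eps$ tail estimate: the prefactor $\eps^{-d/2}$ in $\Ge$ diverges as $\eps \to 0^+$, so one must borrow a sliver of the Gaussian decay coming from $\operatorname{dist}(x, P) > 0$ to absorb this singularity. Once a summable majorant $g$ has been assembled from the trivial bound on $N \cap \Z^d$ plus the Gaussian tail bound outside $N$, the dominated convergence theorem combined with the pointwise limit from Lemma \ref{lem:Lem1} yields \eqref{eq:SAS1} and completes the proof.
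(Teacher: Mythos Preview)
Your proof is correct and follows essentially the same approach as the paper: split the lattice into a bounded neighborhood of $P$ (where the sum is finite and the trivial bound $(\One_P\ast\Ge)(x)\le 1$ suffices) and a far region, then produce an $\eps$-independent summable Gaussian majorant on the far region so that the limit and sum may be interchanged. The only cosmetic difference is how the $\eps^{-d/2}$ prefactor is neutralized: the paper observes that $\eps\mapsto\Ge(x)$ is increasing on $(0,1]$ once $\|x\|$ is large enough, hence $\Ge(x)\le\G_1(x)$ there, whereas you ``borrow a sliver'' of the exponent to absorb $\eps^{-d/2}$; both tricks yield the same Gaussian tail bound at $\eps=1$.
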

	\begin{proof}
		In view of Lemma \ref{lem:Lem1}, we only need to verify the interchange of the limit and the infinite sum. First, we note that
		\begin{align*}
			(\One_P \ast \Ge)(x) &= \int_P \Ge(y-x) dy \leq \sup_{y \in P} \Ge(x-y) \vol(P) = l_{\eps}(x,P) \vol(P).
		\end{align*}

		Here we set $l_{\eps}(x,P):=\sup_{y \in P} \Ge(x-y)$. Let $R$ be a positive number such that 
		$2 \pi R^2 > N$. If $\|x\|^2 > R$ and $0 < \eps \leq 1$, then $\Ge(x)$ is a strictly increasing function of $\eps$. Specifically, $\Ge(x) < \G_1(x)$. Denote $\Omega(R) := \{ x \in \R^d : \sup_{y \in P} \|x-y\|^2 \leq R \}$. We may assume, by increasing $R$ if necessary, that $\Omega(R)$ contains the polytope $P$. Now we have
		\begin{align*}
			\lim_{\eps\to 0^+} \sum_{x\in\Z^d \cap \Omega(R)} (\One_P \ast \Ge)(x) &= \sum_{x\in\Z^d \cap \Omega(R)} \lim_{\eps\to 0^+} (\One_P \ast \Ge)(x) \\
			&=  \sum_{x\in\Z^d \cap \Omega(R)} \omega_P(x) = A_P(1).
		\end{align*}
		The first equality is justified because the inner sum is a finite sum, while the second one follows directly from Lemma \ref{lem:Lem1}. 
		
		We still have to take care of the tail of the infinite sum in Equation (\ref{eq:SAS1}), which is estimated as follows:

		\begin{align*}
			\sum_{x \in \Z^d  \setminus \Omega(R)} (\One_P \ast \Ge)(x) &\leq \vol(P) 
			\sum_{x \in \Z^d \setminus \Omega(R)} l_{\eps}(x,P) \\
			&\leq \vol(P) \sum_{x \in \Z^d \setminus \Omega(R)} l_{1}(x,P).
		\end{align*}

		The usual integral comparison method is now used to dominate this sum by the following integral (taken over a slightly larger region):
			\[ \vol(P) \int_{\R^d \setminus \Omega(R - d^2) } l_{1}(x,P) dx =: g(R), \]
		where $d$ is the diameter of the unit cell. When $R$ tends to $\infty$, $g(R)$ will approach $0$ because $\G_1$ has a finite total mass. This means that the tail of the infinite sum in Equation (\ref{eq:SAS1}) can be bounded by a quantity which depends not on $\eps$, but on $R$, and which tends to $0$ as $R$ goes to $\infty$. Hence, by taking $R$ to be very large, we can obtain an estimate of the right-hand side of Equation (\ref{eq:SAS1}) that is as close to $A_P(1)$ as we want, proving equation (\ref{eq:SAS1}).
	\end{proof}
	
As was mentioned in the introduction, the Poisson Summation formula, applied to the rapidly decreasing function $\One_P \ast \Ge$, yields
\begin{align} \label{convolution} 
\sum_{x\in\Z^d} (\One_P \ast \Ge)(x) &= \sum_{\xi\in\Z^d} \widehat{(\One_P \ast \Ge)}(\xi) \\   
											&= \sum_{\xi\in\Z^d} \hat{\One}_P(\xi) \Geh(\xi) \\
											&= \sum_{\xi\in\Z^d} \hat{\One}_P(\xi) e^{-\pi\eps\|\xi\|^2}.
\end{align}

Thus, taking limits of both sides of \eqref{convolution} as $\epsilon$ tends to zero, and now using the limiting property of Theorem \ref{thm:SAS1}, we obtain
\begin{equation}\label{solidanglesum.1}
A_P(1) =  \lim_{\eps\to 0^+}      \sum_{\xi\in\Z^d} \hat{\One}_P(\xi) e^{-\pi\eps\|\xi\|^2}.
\end{equation}

Going a bit further, we can also easily obtain a similar description for the solid angle sum of any dilate of $P$, by using a standard dilation identity for Fourier transforms:   $\hat{\One}_{tP}(\xi) = t^d \hat{\One}_P(t\xi)$ .

\begin{lem}
Let $P$ be a full-dimensional polytope in $\R^d$ and $t$ any positive real number. 
Then the solid-angle sum of $P$ can be rewritten as follows:
\begin{align} \label{FourierSolidAngleSum}
		A_P(t) = t^d \lim_{\eps\to 0^+} \sum_{\xi\in\Z^d} \hat{\One}_{P}(t\xi) e^{-\pi\eps\|\xi\|^2}, 
\end{align}
\end{lem}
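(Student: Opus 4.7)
The plan is to reduce the claim to the case $t=1$, which has essentially already been proved in \eqref{solidanglesum.1}. The starting observation is that by the definition \eqref{solidanglesum1}, the solid-angle sum satisfies the trivial identity
\begin{equation*}
A_P(t) = \sum_{x \in \Z^d} \omega_{tP}(x) = A_{tP}(1),
\end{equation*}
because $\omega_{tP}$ is itself the solid-angle function of the (again full-dimensional) polytope $tP$. So the lemma reduces to applying the chain of identities culminating in \eqref{solidanglesum.1} to the dilated polytope $tP$ in place of $P$.

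Concretely, I would first invoke Theorem \ref{thm:SAS1} for $tP$, giving
\begin{equation*}
A_{tP}(1) = \lim_{\eps \to 0^+} \sum_{x \in \Z^d} (\One_{tP} \ast \Ge)(x).
\end{equation*}
Since $\One_{tP} \ast \Ge$ is Schwartz, Poisson summation applies exactly as in the paragraph preceding the lemma:
\begin{equation*}
\sum_{x \in \Z^d} (\One_{tP} \ast \Ge)(x) \;=\; \sum_{\xi \in \Z^d} \widehat{\One_{tP}}(\xi)\, \Geh(\xi) \;=\; \sum_{\xi \in \Z^d} \hat{\One}_{tP}(\xi)\, e^{-\pi\eps\|\xi\|^2}.
\end{equation*}
The final step is to substitute the standard Fourier dilation identity $\hat{\One}_{tP}(\xi) = t^d \hat{\One}_P(t\xi)$, which the text has explicitly singled out, and pull $t^d$ outside of the sum and limit. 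Taking $\eps \to 0^+$ gives \eqref{FourierSolidAngleSum}.

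There is no real obstacle here, since every ingredient has already been established: the only thing to check is that the dominated-convergence/tail argument used in the proof of Theorem \ref{thm:SAS1} goes through for an arbitrary full-dimensional polytope, which it does verbatim with $tP$ in place of $P$ (the bounds $l_\eps(x,P) \le l_1(x,P)$ and the integral comparison depend only on the diameter and volume of the polytope, both of which are finite for $tP$). Thus the proof is essentially a three-line reduction combining $A_P(t) = A_{tP}(1)$, Theorem \ref{thm:SAS1}, Poisson summation, and the dilation identity.
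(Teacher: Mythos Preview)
Your proof is correct and follows exactly the same route as the paper: reduce to $A_{tP}(1)$, apply \eqref{solidanglesum.1} (i.e., Theorem~\ref{thm:SAS1} plus Poisson summation) to the polytope $tP$, and then use the dilation identity $\hat{\One}_{tP}(\xi)=t^d\hat{\One}_P(t\xi)$. The paper's proof is just the terse two-line version of what you wrote.
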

\begin{proof}
Using \eqref{solidanglesum.1}, we have 
		\begin{align}
A_P(t) = A_{tP}(1) &= \lim_{\eps\to 0^+} \sum_{\xi\in\Z^d} \hat{\One}_{tP}(\xi) e^{-\pi\eps\|\xi\|^2} \nonumber\\
		&= t^d \lim_{\eps\to 0^+} \sum_{\xi\in\Z^d} \hat{\One}_{P}(t\xi) e^{-\pi\eps\|\xi\|^2}, \label{eq:DI}
\end{align}

\end{proof}

Using the last Lemma, we may now rewrite the solid angle polynomial in a more suggestive form, as follows. 
\begin{align}\label{polynomiality2}
A_P(t) 
&  =  t^d \lim_{\eps\to 0^+} \sum_{\xi\in\Z^d} 
          \hat{\One}_{P}(t\xi) e^{-\pi\eps\|\xi\|^2} \\
& =   t^d \lim_{\eps\to 0^+} \sum_{\xi\in\Z^d}  
\left( 
\vol(P) e^{i\Phi_{\xi}} \ \One_{\{ 0 \} }(\xi) 
+\ \frac{-1}{2 \pi i} \sum_{G \in \partial P} 
\frac{\langle t\xi, N_P(G) \rangle }{\|  t \xi \|^2} \hat{G} (t \xi) \ 
\One_{\R^d \setminus \{0\}}(\xi)  
\right) \\  \label{lattice-set-up}
& =   t^d   \vol(P)
+ \left( \frac{-1}{2 \pi i}  \right) 
 t^{d-1} \sum_{G \in \partial P} 
 \lim_{\eps\to 0^+} 
 \sum_{\xi\in\Z^d  \setminus \{0\}}
 \frac{\langle \xi, N_P(G) \rangle }{\|  \xi \|^2} \hat{G} (t \xi).
\end{align}

This set-up is now ready for an iterative application of the combinatorial Stokes formula for $\hat F$, applied to each Fourier transform of a facet $G$ (which is a polytope), and so on.

\bigskip
	\section{Proof of the combinatorial Stokes formula for $\hat P$}\label{sec:Stokes}
	
 Here we use the notation $\hat{G}$ with the understanding that the Fourier transform is taken with respect to the Hausdorff measure associated to  $G$, not the Lebesgue measure of the ambient space $\R^d$.

    \begin{proof}
		The gradient $\textup{grad}_{F}\ \phi_{\xi} (x)$ of $\phi_{\xi}(x)$ with respect to the Riemannian structure of the manifold-with-boundary $F$ is proportional to the projection of the gradient of ${\xi}$ onto $F$. More precisely, $\textup{grad}_{F}\ \phi_{\xi} (x) = -2\pi \proj_{F} (\xi)$, which is independent of $x$.

		If $\proj_{F} (\xi) = 0$, then $\phi_{\xi}$ is constant on $F$. Thus, $\hat{F} (\xi)$ is just an integral of a constant function on $F$. If $\Phi_{\xi}$ is the constant value of $\phi_{\xi}$ on $F$, then
			\[ \hat{F} (\xi) = \vol(F) e^{i\Phi_{\xi}}. \]

		For the other case, since $\textup{grad}_{F}\ \phi_{\xi}(x) = \proj_{F} (\xi)$ is constant on $F$, we obtain the identity:
			\[ \textup{div}_F\ \textup{grad}_F\ e^{i\phi_{\xi}(x)} = -\|\textup{grad}_F\ \phi_{\xi}(x)\|^2 e^{i\phi_{\xi}(x)}, \]
		which shows that $e^{i\phi_{\xi}(x)}$ is an eigenfunction of the Laplacian with eigenvalue 
		$$\lambda := -\| \textup{grad}_{F}\ \phi_{\xi}(x)\|^2 \neq 0.$$ 
		Therefore,

		\begin{align*}
			\hat{F}(\xi) &= \int_{F} e^{i\phi_{\xi}(x)} dF 
			= \frac{1}{\lambda} \int_{F} \textup{div}_{F}\ \textup{grad}_{F}\ e^{i\phi_{\xi}(x)} dF \\
						&= \frac{1}{\lambda} \sum_{G \in \partial F} \int_{G} \langle \textup{grad}_{F} e^{i\phi_{\xi}(x)},N_F(G) \rangle dG \\
						&= \frac{i}{\lambda} \sum_{G \in \partial F} \langle \textup{grad}_{F}\  \phi_{\xi}(x),N_F(G) \rangle \int_{G} e^{i\phi_{\xi}(x)} dG \\
						&= \frac{-1}{2 \pi i} \sum_{G \in \partial F} \frac{\langle \proj_{F} (\xi), N_F(G) \rangle }{\| \proj_{F} (\xi) \|^2} \hat{G} (\xi).
		\end{align*}
We have employed Stokes' theorem on $F$ in the third equality, writing its Fourier transform as a finite weighted combination of Fourier transforms of its facets.
	\end{proof}

\medskip
We notice that when $\dim(F) = 1$, which means $F$ is a line segment, the above theorem is still valid with the convention that the Fourier transforms of its facets, which are vertices in this case, are just the valuations of $e^{i\phi_{\xi}(x)}$ at those vertices.

	This theorem will be our main tool for understanding the behavior of the solid-angle sums $A_P(t)$ for all nonzero real numbers $t$.  We denote the subspace of $\R^d$ orthogonal to a polytope $F$ by $F^{\perp}$. By convention, if $\dim(F)=0$ then $F^{\perp} = \R^d$.

Thus, Theorem \ref{thm:DSF}  may be rewritten as
\begin{align} \label{DSFrewritten}
&	\hat{F}(\xi)\ =\ \vol(F) e^{i\Phi_{\xi}} \ \One_{F^{\perp}}(\xi) 
			+\ \frac{-1}{2 \pi i} \sum_{G \in \partial F} \frac{\langle \proj_{F} (\xi), N_F(G) \rangle }{\| \proj_{F} (\xi) \|^2} \hat{G} (\xi) \ \One_{\R^d \setminus F^{\perp}}(\xi).
\end{align}

We have written the combinatorial Stokes' formula in general here, meaning that we can now apply it to any face $F$ of $P$.  But to begin the iteration, we would like to to apply our combinatorial Stokes' formula  \ref{thm:DSF} first with $F:= P$, a full-dimensional real polytope in $\R^d$.   In this case full-dimensional case, we have to take care of those $\xi \in \R^d$ which are orthogonal to all of $P$, but this implies that there is only the $0$-dimensional subspace here, namely $\xi = 0$.  Also, noting that in this first full-dimensional step we have simply $\proj_{P} (\xi) = \xi$, \eqref{DSFrewritten} becomes

\begin{align} \label{FirstIterationOfStokes}
&   \hat{P}(\xi)\ =    \vol(P) e^{i\Phi_{\xi}} \ \One_{\{ 0 \} }(\xi) 
+\ \frac{-1}{2 \pi i} \sum_{G \in \partial P} 
\frac{\langle \xi, N_P(G) \rangle }{\|  \xi \|^2} \hat{G} (\xi) \ 
\One_{\R^d \setminus \{0\}}(\xi).
\end{align}

\bigskip
\section{Proof of Theorem \ref{thm:main} }    \label{ProofOfTheMainTheorem}

\begin{proof}
            Beginning with equation \eqref{FourierSolidAngleSum}, we have
\begin{align*}
			A_P(t) &=  t^d \lim_{\eps\to 0^+} \sum_{\xi\in\Z^d} \hat{\One}_{P}(t\xi) e^{-\pi\eps\|\xi\|^2} \\
					&= t^d \lim_{\eps\to 0^+} \sum_{\xi\in\Z^d} \sum_{\gT} \cR_{\gT}(t\xi) \cE_{\gT}(t\xi) \One_{S(\gT)}(t\xi) e^{-\pi\eps\|\xi\|^2} \\
					&= t^d \lim_{\eps\to 0^+} \sum_{\xi\in\Z^d} \sum_{\gT} \cR_{\gT}(\xi) t^{-l(\gT)} \cE_{\gT}(t\xi) \One_{S(\gT)}(\xi) e^{-\pi\eps\|\xi\|^2} \\
					&= t^d \lim_{\eps\to 0^+} \sum_{\xi\in\Z^d \cap S(T)} \sum_{\gT} \cR_{\gT}(\xi) t^{-l(\gT)} \cE_{\gT}(t\xi)    e^{-\pi\eps\|\xi\|^2}.
\end{align*}

		Here, the inner sums in the last two lines are taken over all rooted chains of           $\GP$.   We used \eqref{ingredient1} in the second equality, noting that this is the step that uses the face poset of $P$. 
		The last equality follows from the homogeneity of $\cR_{\gT}(\xi)$ and the fact that a point $\xi$ lies in the admissible set $S(\gT)$ if and only if $t\xi$ also lies in $S(\gT)$.   We may now group together the rooted chains $\gT$ by the length $l(\gT)$ of $\gT$,  and  we obtain the desired identity.
	\end{proof}


\bigskip
\section{The quasi-coefficient $a_{d-1}(t)$} \label{sec:codim1}

In this section we work with any  {\it real} polytope $P$, and we prove Theorem \ref{codim1coeff}.   Although the 
quasi-coefficients of $A_P(t)$ appear to be difficult to calculate in general, it turns out, somewhat surprisingly,  that there exists a closed-form for the codimension-1 quasi-coefficient $a_{d-1}(t)$,  for all {\it real} dilations of $P$. 
We first recall the description of $a_{d-1}(t)$, given by the main Theorem \ref{thm:main} :
\begin{align*}
a_{d-1}(t) &:= \lim_{\eps\to 0^+} \sum_{\xi\in\Z^d} \sum_{l(\gT) = 1} \cR_{\gT}(\xi) \cE_{\gT}(t\xi) \One_{S(\gT)}(\xi) e^{-\pi\eps\|\xi\|^2} \\
&= \sum_{F \textup{ facet of } P} \lim_{\eps\to 0^+} \sum_{\xi \in (\Z^d \cap F^{\perp}) \setminus 0}  \cR_{P \to F}(\xi) \cE_{P \to F}(t\xi) e^{-\pi\eps\|\xi\|^2} 
\end{align*}
In the last equality, we use the fact that the admissible set for the rooted chain $(P \to F)$ is $F^{\perp} \setminus 0$.  Note that $F^{\perp}$ has dimension $1$, since it is spanned by the normal vector to $F$, $N_P(F)$.  Thus one of the following two cases must occur:

\medskip \noindent 
Case 1.  \ $F^{\perp}$ contains a $1$-dimensional sublattice of $\Z^d$, or
 
 \medskip \noindent
Case 2.   \ $F^{\perp}$ only has the origin in its intersection with $\Z^d$.
 
\medskip  \noindent
In case (1), we define $v_F$ to be the unique lattice point of shortest Euclidean norm with the same direction as the outward-pointing unit normal vector $N_P(F)$ (also called a primitive lattice point).  In case (2), we set $v_F = 0$.  We note that case (2) may happen if  for example the components of the normal vector are irrationals that are pairwise incommensurable with one other.  

Suppose we are in case (1). Then any frequency vector $\xi \in (\Z^d \cap F^{\perp}) \setminus 0$ must have the form $n v_F$ for some nonzero positive integer $n$. Let $x_F$ be an arbitrary point in the affine span of  $F$, and set
\begin{equation}
h_F(t) := \langle v_F, x_F \rangle t.
\end{equation}

\noindent
We have
\begin{align*}
\mathcal{R}_{(P \to F)}(\xi) &= \frac{\vol(F)}{-2 \pi i}\frac{\langle \xi, \frac{v_F}{\|v_F\|} \rangle}{\|\xi\|^2} = \frac{\vol(F)}{-2 \pi i}\frac{n \|v_F\|}{\|n v_F\|^2} =
\frac{1}{-2 \pi i n}   \frac{\vol(F)}{\|v_F\|}, \\
\mathcal{E}_{(P \to F)}(t\xi) &= e^{-2 \pi i n (\langle v_F, x_F \rangle t)} = e^{-2 \pi i h_F(t) n}.\\
\end{align*}
In case (2), the summation domain $(\Z^d \cap F^{\perp}) \setminus 0$ is empty. Hence, we can omit this case in the computation of $a_{d-1}(t)$ and deduce
\begin{align} \label{codim1coeff.setup}
a_{d-1}(t) &= \sum_{\substack{F \textup{ facet of } P \\ v_F \neq 0}}  \frac{\vol(F)}{\|v_F\|} \left( \lim_{\eps\to 0^+} \sum_{n \in \Z_{\neq 0}}  \frac{e^{-2 \pi i h_F(t) n}}{-2 \pi i n} e^{-\pi \eps \|v_F\|^2 n^2} \right) \nonumber \\
&= \sum_{\substack{F \textup{ facet of } P \\ v_F \neq 0}}  \frac{\vol(F)}{\|v_F\|} \left( \lim_{\eps\to 0^+} \sum_{n \in \Z_{\neq 0}}  \frac{e^{-2 \pi i h_F(t) n}}{-2 \pi i n} e^{-\pi \eps n^2} \right). 
\end{align}

We first focus on a fixed facet $F$ with $v_F \neq 0$, which simply means that the orthogonal complement of $F$ contains a non-degenerate sublattice of $\Z^d$. The key to computing the inner limiting sum corresponding to $F$ is to realize that its summands can be expressed as the Fourier transform of a compactly supported function and then to re-apply the Poisson summation formula  (in reverse, so to speak).  It turns out that the required function here is a translation of the first Bernoulli polynomial $B_1(x)$. 
Recall that $B_1(x)$ is defined as 
\begin{equation}\label{eq:Bernoulli1}
B_1(x) := \left\{
\begin{array}{ll}
x - \frac{1}{2} & \textup{when } x \in (0,1), \\
0 & \textup{otherwise}.
\end{array}
\right. 
\end{equation} 
Its Fourier transform, evaluated at any integer frequency $n \in \Z$, is
\begin{align*}
\widehat{B_1} (n) &= \int_0^1 \left( x - \frac{1}{2} \right) e^{2 \pi i x n} dx \\
&= \left\{
\begin{array}{ll}
\frac{1}{2 \pi i n} & \textup{when } n \in \Z_{\neq 0}, \\
0 & \textup{when } n = 0.
\end{array}
\right.
\end{align*}

Let $T_{x_0} (x) := x - x_0$ denote the translation by $x_0$. Then, 
\begin{align*}
\widehat{B_1 \circ T_{h_F(t)}} (n) &= \widehat{B_1}(n) e^{-2\pi i h_F(t) n} \\
&= \left\{
\begin{array}{ll}
\frac{e^{-2\pi i h_F(t) n}}{2 \pi i n} & \textup{when } n \in \Z_{\neq 0}, \\
0 & \textup{when } n = 0.
\end{array}
\right.
\end{align*}

\noindent
The remaining factor of the innermost summand in Equation \eqref{eq:codim1} is simply the Fourier transform of the $1$-dimensional Gaussian $\Ge(x)$. In order to finish the computation of the codimension-1 quasi-coefficient $a_{d-1}(t)$, we need to slightly generalize both Lemma \ref{lem:Lem1} and Theorem \ref{thm:SAS1} by replacing the indicator function $\One_P$ with a continuous function $f$ which is
supported on the polytope $P$.   Because $P$ is bounded, $f$ is both uniformly continuous and compactly supported.  This prevents any convergence issues and allows us to interchange the limit and the summation, as in the proofs of Lemma \ref{lem:Lem1} and Theorem \ref{thm:SAS1}. Hence, we obtain the following results, whose proofs are identical with our previous proofs.

\begin{lem} \label{lem:Lem2}
	Let $f$ be a continuous function on $P$ and zero outside $P$. Then, for all $x \in \R^d$,
	\[ \lim_{\eps \to 0^+} (f \ast \Ge) (x) = f(x) \omega_P(x). \]
\end{lem}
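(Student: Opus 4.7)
The plan is to reduce this lemma to Lemma \ref{lem:Lem1} by a standard add-and-subtract maneuver. Since $f$ vanishes outside of $P$, we may write
\[ (f \ast \Ge)(x) = \int_P f(y) \Ge(x-y) \, dy = f(x) \, (\One_P \ast \Ge)(x) + \int_P (f(y) - f(x)) \Ge(x-y) \, dy. \]
Lemma \ref{lem:Lem1} already tells us that the first summand converges to $f(x)\,\om_P(x)$ as $\eps \to 0^+$, which is the desired limit. The whole proof therefore reduces to showing that the second summand (the error term) tends to $0$.

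To control the error, I would exploit the fact that $P$ is compact and $f$ is continuous on $P$, so $f$ is uniformly continuous on $P$ and bounded with $\|f\|_\infty := \sup_P |f| < \infty$. Fix $\eta > 0$ and choose $r > 0$ so that $|f(y) - f(x)| < \eta$ whenever $y \in P$ and $\|y - x\| < r$; if $x \notin P$, then since $P$ is closed I may further shrink $r$ so that $B(x,r) \cap P = \emptyset$ (noting that $f(x) = 0$ in that case, so the error integral simplifies accordingly). Splitting the domain of integration at $\|y - x\| = r$, the near-diagonal piece is dominated by $\eta \int_{\R^d} \Ge(x - y)\,dy = \eta$, while the far piece is dominated by
\[ 2 \|f\|_\infty \int_{\|u\| \ge r} \Ge(u)\,du, \]
which tends to $0$ as $\eps \to 0^+$ because $\{\Ge\}$ is a Dirac sequence. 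Letting $\eta \to 0$ afterwards shows that the error integral vanishes in the limit.

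There is essentially no serious obstacle, since the delicate behavior at boundary points of $P$ is already captured by Lemma \ref{lem:Lem1}; the continuity of $f$ merely smooths the integrand without altering the underlying approximation-to-the-identity mechanism. At interior points the factor $\om_P(x) = 1$ reproduces the classical pointwise limit $f(x)$; at exterior points both sides vanish (the tail estimate above dominates the entire integral, since $\mathrm{dist}(x, P) > 0$); and at boundary points the add-and-subtract decomposition cleanly isolates the same solid-angle contribution as in the indicator-function case. The only conceptual subtlety worth flagging is that $f$, extended by zero to all of $\R^d$, can have a jump discontinuity along $\partial P$, and it is precisely this jump that reintroduces the solid-angle factor $\om_P(x)$ in the limit.
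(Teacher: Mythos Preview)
Your argument is correct. The add-and-subtract decomposition cleanly isolates the solid-angle contribution via Lemma~\ref{lem:Lem1}, and your near/far split handles the remainder; the case analysis for $x\notin P$ (using that $P$ is closed, so $\mathrm{dist}(x,P)>0$) is exactly what is needed.

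Your route differs from the paper's. The paper does not spell out a proof of Lemma~\ref{lem:Lem2} but asserts that it is ``identical with our previous proofs,'' i.e., one should rerun the change-of-variables argument of Lemma~\ref{lem:Lem1}: substituting $v=(y-x)/\sqrt{\eps}$ turns $(f\ast\Ge)(x)$ into $\int_{\eps^{-1/2}(P-x)} f(x+\sqrt{\eps}\,v)\,\G_1(v)\,dv$, and then one lets the dilated polytope fill out the tangent cone $K$ while $f(x+\sqrt{\eps}\,v)\to f(x)$ pointwise, invoking dominated convergence with the integrable majorant $\|f\|_\infty\,\G_1$. By contrast, you treat Lemma~\ref{lem:Lem1} as a black box and superimpose the standard approximate-identity estimate. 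Your approach is more modular and makes the role of continuity of $f$ completely transparent; the paper's implied approach is slightly more self-contained (no second appeal to Lemma~\ref{lem:Lem1}) but requires redoing the cone-blow-up geometry. Either way the substance is the same: the boundary behavior is entirely carried by the indicator-function case, and continuity of $f$ contributes only a vanishing error.
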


\begin{thm} \label{thm:SAS2}
	Let $P$ be a full-dimensional polytope in $\R^d$. Then,
	\[ \sum_{x \in \Z^d} f(x) \omega_P(x) = \lim_{\eps \to 0^+} \sum_{x \in \Z^d} (f \ast \Ge) (x). \]
	
\end{thm}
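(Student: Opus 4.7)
The plan is to transcribe the proof of Theorem \ref{thm:SAS1} almost verbatim, with the indicator function $\One_P$ replaced by $f$. By Lemma \ref{lem:Lem2} we already have the pointwise convergence $\lim_{\eps\to 0^+}(f\ast\Ge)(x)=f(x)\omega_P(x)$ for every $x\in\R^d$, so the only substantive content of the theorem is to justify the interchange of the limit and the infinite lattice sum on the right-hand side.

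To execute that justification, I would first exploit the fact that $f$ is continuous on the compact polytope $P$ and vanishes outside it, so $M:=\sup_{x\in\R^d}|f(x)|$ is finite. Then, exactly as in the proof of Theorem \ref{thm:SAS1},
\[ |(f\ast\Ge)(x)| \le M\int_P \Ge(x-y)\,dy \le M\,\vol(P)\sup_{y\in P}\Ge(x-y), \]
which is exactly the Gaussian majorant that was used in the $\One_P$ case, scaled by the constant $M$. This uniform bound $M$ plays the role that the trivial bound $\One_P\le 1$ played before.

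Next, mirroring the proof of Theorem \ref{thm:SAS1}, I would choose $R>0$ so large that $P\subset\Omega(R)$ and $2\pi R^2>d$, which ensures $\Ge(x-y)\le\G_1(x-y)$ uniformly in $\eps\in(0,1]$, $x\notin\Omega(R)$, and $y\in P$. Split the sum at $\Omega(R)$: the finite piece over $\Z^d\cap\Omega(R)$ admits termwise passage to the limit by Lemma \ref{lem:Lem2}, yielding $\sum_{x\in\Z^d\cap\Omega(R)}f(x)\omega_P(x)$, and since $f$ is supported inside $P\subset\Omega(R)$ this equals $\sum_{x\in\Z^d}f(x)\omega_P(x)$. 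The tail piece is bounded uniformly in $\eps$ by
\[ M\,\vol(P)\sum_{x\in\Z^d\setminus\Omega(R)}\sup_{y\in P}\G_1(x-y) \le M\,\vol(P)\int_{\R^d\setminus\Omega(R-d^2)}\sup_{y\in P}\G_1(x-y)\,dx, \]
via the same integral comparison used before, and this integral tends to $0$ as $R\to\infty$ because $\G_1$ has finite total mass. Letting $R\to\infty$ then gives the desired identity.

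There is essentially no genuine obstacle here: the argument is a direct copy of the proof of Theorem \ref{thm:SAS1}, with the single new input being the uniform bound $|f|\le M$ that replaces $\One_P\le 1$. This is why the authors can legitimately assert that the two proofs are identical; the only conceptual point worth flagging is that $f\omega_P$ is automatically supported inside $P$, so the left-hand sum $\sum_{x\in\Z^d}f(x)\omega_P(x)$ is in fact a finite sum and is unaffected by the choice of $R$ once $R$ is taken large enough.
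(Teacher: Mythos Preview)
Your proposal is correct and is exactly the approach the paper has in mind: the authors give no separate proof of Theorem~\ref{thm:SAS2}, stating only that the argument is ``identical'' to that of Theorem~\ref{thm:SAS1} once $\One_P$ is replaced by the bounded, compactly supported continuous function $f$. Your write-up supplies precisely those details, with the single new ingredient being the uniform bound $|f|\le M$ in place of $\One_P\le 1$; the splitting at $\Omega(R)$, the termwise limit on the finite piece via Lemma~\ref{lem:Lem2}, and the $\eps$-uniform Gaussian tail estimate are all carried over verbatim, as intended.
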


\noindent
Note that the left-hand side of the above identity is in fact a finite sum because $P$ is compact. Now we can continue our computation as follows:

\begin{align*}
\lim_{\eps\to 0^+} \sum_{n \in \Z_{\neq 0}}  \frac{e^{-2 \pi i h_F(t) n}}{-2 \pi i n} e^{-\pi \eps n^2} &= \lim_{\eps\to 0^+} \sum_{n \in \Z} 
       \mathcal{F} \{   B_1 \circ T_{h_F(t)}   \} (n) \Geh(n) \\
&= \lim_{\eps\to 0^+} \sum_{n \in \Z}  \mathcal{F} \{ (B_1 \circ T_{h_F(t)}) \ast \Ge \}(n) \\
&= \lim_{\eps\to 0^+} \sum_{n \in \Z}  \left( (B_1 \circ T_{h_F(t)}) \ast \Ge \right) (n)  \\
&= \sum_{n \in \Z}  (B_1 \circ T_{h_F(t)})(n) \omega_{[h_F(t),1 + h_F(t)]} (n) \\
&= \bar{B}_1 (h_F(t)),
\end{align*}	

\noindent
where we have used Poisson summation in the third equality above. 
Here, $\bar{B}_1 (x)$ is the periodized version of   $B_1$, and is defined formally
 as $\bar{B}_1 (x) = B_1(\{x\})$ with $\{x\} = x - \lfloor x \rfloor$ being the fractional part of $x$. In the above calculation, the fourth  equality  follows from Theorem \ref{thm:SAS2}. The last equality can be easily derived by considering separately the case when $h_F(t) :=  \langle v_F, x_F \rangle t$ is an integer or not. 
 
 \noindent
 Therefore, from \eqref{codim1coeff.setup}, 
 we finally obtain the required explicit formula of Theorem~\ref{codim1coeff}
 for the codimension-1 quasi-coefficient of the solid-angle sum $A_P(t)$:
 \begin{equation*}
a_{d-1} (t) = 
\sum_{\substack{F \textup{ a facet of } P \\ with \ v_F \neq 0}}  \frac{\vol(F)}{\|v_F\|} 
\bar{B}_1 (\langle v_F, x_F \rangle t),
\end{equation*}
 where $x_F$ is any point lying in the affine span of $F$, and $v_F$ is the unique primitive integer vector which is an outward-pointing normal vector  to the facet $F$.


\bigskip
\section{Applications of the Fourier-analytic description}   \label{sec:apps1}
	
We again recall that it is classically known, from the work of Macdonald \cite{macdonald1, macdonald2} that, when restricted to integer dilates $t$, the solid angle sum $A_P(t)$ is a polynomial if $P$ is an integer polytope and a quasi-polynomial if $P$ is a rational polytope. The following theorem generalizes both cases and reveals the periodicity of all quasi-coefficients $a_i(t)$, for $0 \leq i \leq d$, in the case of {\it real} dilates of an integer polytope $P$. In fact, they share a common period.

\begin{thm}[Periodicity] \label{thm:Struc1}
		Let $P$ be a full-dimensional integer polytope in $\R^d$,  and let $t$ be a nonzero real number. Then, for all $0 \leq i \leq d$, the quasi-coefficient $a_i(t)$ is a periodic function, with period $1$.
\end{thm}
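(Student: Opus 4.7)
The plan is to read off periodicity directly from the explicit formula for the quasi-coefficients provided by Theorem \ref{thm:main}, by exploiting the fact that the only $t$-dependent ingredient in that formula is the exponential weight $\cE_{\gT}(t\xi)$. Indeed, for a fixed rooted chain $\gT = (P \to F_1 \to \cdots \to F_{d-i})$, the rational weight $\cR_{\gT}(\xi)$, the damping factor $e^{-\pi\eps\|\xi\|^2}$, and the admissible set $S(\gT)\cap\Z^d$ do not involve $t$. Thus the entire $t$-dependence of $a_i(t)$ is encoded in the factor $\cE_{\gT}(t\xi)=e^{-2\pi i \langle t\xi,x_0\rangle}$, for $x_0$ any point of the terminal face $F_{d-i}$.

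The next step is to exploit the integrality hypothesis on $P$. Because $P$ is an integer polytope, every face $F_{d-i}$ has at least one vertex lying in $\Z^d$; I would take $x_0$ to be such an integer vertex. Recall from the definition of $\cE_{\gT}$ that, for $\xi\in S(\gT)$, the quantity $\langle\xi,x_0\rangle$ is independent of the choice of $x_0\in F_{d-i}$, so this specific choice is harmless. Then for any $\xi\in\Z^d$ we have $\langle\xi,x_0\rangle\in\Z$, and consequently
\begin{equation*}
\cE_{\gT}((t+1)\xi) \;=\; e^{-2\pi i(t+1)\langle\xi,x_0\rangle} \;=\; e^{-2\pi i t\langle\xi,x_0\rangle}\cdot e^{-2\pi i \langle\xi,x_0\rangle} \;=\; \cE_{\gT}(t\xi),
\end{equation*}
since the second factor equals $1$. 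Substituting this into the formula \eqref{complicatedcoeff} term-by-term yields $a_i(t+1)=a_i(t)$ for every $0\le i\le d$.

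There is essentially no analytic obstacle here, as the $\eps\to 0^+$ limit and the lattice summation in \eqref{complicatedcoeff} do not interact with the shift $t\mapsto t+1$: the shift only multiplies each summand by the factor $1$, uniformly in $\xi$ and $\eps$. The only subtlety worth flagging explicitly is the requirement that $x_0$ may be chosen to be an integer point; without integrality of the polytope one cannot in general make $\langle\xi,x_0\rangle$ an integer, which is precisely why the periodicity statement is restricted to integer polytopes (for rational polytopes, a parallel argument with $x_0\in\tfrac{1}{N}\Z^d$ would yield period $N$, but this extension is not part of the stated theorem). Hence the proof will consist of the single identity above, preceded by a brief invocation of the formula of Theorem \ref{thm:main} and the choice of integer vertex $x_0\in F_{d-i}$.
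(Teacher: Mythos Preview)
Your proposal is correct and follows essentially the same argument as the paper: invoke the formula of Theorem~\ref{thm:main}, observe that the only $t$-dependence sits in $\cE_{\gT}(t\xi)$, choose the reference point $x_0$ on the terminal face to be an integer vertex, and use $\langle\xi,x_0\rangle\in\Z$ to conclude $\cE_{\gT}((t+1)\xi)=\cE_{\gT}(t\xi)$. Your additional remarks about the $\eps\to 0^+$ limit being unaffected and about the rational case are fine elaborations but not needed for the proof.
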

	\begin{proof}
		In fact, it follows from Theorem \ref{thm:main} that
			\[ a_i(t) := \lim_{\eps\to 0^+} \sum_{\xi\in\Z^d \cap S(\gT)} 
			\sum_{l(\gT) = d-i} \cR_{\gT}(\xi) \cE_{\gT}(t\xi) \  e^{-\pi\eps\|\xi\|^2} \]
		for $0 \leq i \leq d$. Notice that, in the above equation, the variable $t$ only appears in the exponential functions $\cE_{\gT}(t\xi)$. These functions can be seen to be periodic in $t$ with period equal to $1$ as follows.
		
\begin{equation}\label{TrivialExponentialTerms}
\cE_{\gT}((t+1)\xi) :=  e^{i\phi_{(t+1)\xi}(x)} = e^{-2\pi i \langle (t+1)\xi,x \rangle} = e^{-2\pi i \langle t\xi,x \rangle} e^{-2\pi i \langle \xi,x \rangle}. 
 \end{equation}

		Let $F$ be the terminal face of the rooted chain $\bf T$. Recall that $x$ can be chosen arbitrarily on the affine subspace spanned by $F$. Therefore, we can set $x$ to be one of the vertices of $F$, which are lattice points. Hence, both $\xi$ and $x$ have integer coordinates, which forces $e^{-2\pi i \langle \xi,x \rangle} = 1$. Hence,
		\[ \cE_{\gT}((t+1)\xi) = e^{-2\pi i \langle t\xi,x \rangle} = \cE_{\gT}(t \xi). \]
This completes our proof of the theorem.
\end{proof}

\medskip
To see that the above theorem covers the case of any rational polytope $P$, let $Q = nP$ be a dilation of $P$ such that $Q$ is now an integer polytope. Below we add the superscripts $P$ and $Q$ to distinguish between the quasi-coefficients of $A_P(t)$ and those of $A_Q(t)$. Therefore, for $0 \leq i \leq d$ and $t \in \Z_{\neq 0}$,
\[ a_i^P(t+n) = a_i^Q\left(\frac{t}{n} + 1\right) = a_i^Q\left(\frac{t}{n}\right) = a_i^P(t). \]
This shows that the solid angle sum $A_P(t)$ for integer dilates of the rational polytope $P$ is a {\bf quasi-polynomial} in the {\bf classical } sense:
 \begin{equation} \label{quasipolynomial}
A_P(t) = (\vol P)  t^d + a_{d-1}(t)  t^{d-1} + a_{d-2}(t)  t^{d-2} +  \cdots +  a_0(t), 
\end{equation} 
where each quasi-coefficient $a_j(t)$ is now a periodic function of $t \in \Z_{>0}$.

We note that the quasi-coefficients in Theorem \ref{thm:Struc1} above might sometimes have a smaller period than a period equal to $1$, and they might even have a period of $0$, which means we would have a polynomial $A_P(t)$, even though $P$ would have some rational coordinates for its vertices.   In general such a phenomenon is called {\bf period collapse} in the case of rational polytopes (\cite{BeckSamWoods},\cite{haasemcallister}, \cite{mcallisterwoods}).   Recently, this phenomenon of period collapse has been studied in the context of triangles with quadratic irrational slopes \cite{CLS}.

\bigskip
\section{Retrieving classical results from the main theorem}\label{classical}
	
Using our main result, namely Theorem \ref{thm:main}, we can easily recover some of the classical theory of solid angles.  Here we offer another proof of Macdonald's classical result, namely equation \eqref{solidanglesum2}, for the solid angle polynomial in the case of an integer polytope.  

\begin{cor}
Let $P$ be a $d$-dimensional integer polytope in $\R^d$.  Then Macdonald's solid angle sum $A_P(t)$ is a polynomial when $t$ is restricted to be a positive integer.  Moreover, if $\dim P$ is even, then $A_P(t)$ is an even polynomial in the positive integer parameter $t$, and if $\dim P$ is odd, then $A_P(t)$ is an odd polynomial in the positive integer parameter $t$.
\end{cor}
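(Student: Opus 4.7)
The plan is to combine Theorems \ref{thm:main} and \ref{thm:Struc1} to obtain polynomiality, and then to kill the wrong-parity coefficients via a $\xi \mapsto -\xi$ symmetry of the lattice sum in \eqref{complicatedcoeff}.

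For polynomiality, Theorem \ref{thm:main} gives $A_P(t) = \sum_{i=0}^d a_i(t)\, t^i$, and Theorem \ref{thm:Struc1} tells us that each $a_i(t)$ is periodic of period $1$. Thus on positive integers $a_i(t) = \tilde a_i := a_i(1)$ is constant, which yields $A_P(t) = \sum_{i=0}^d \tilde a_i\, t^i$ as a polynomial identity for positive integers $t$. To push further, I would simplify the exponential weight. Since $P$ is an integer polytope, for each rooted chain $\gT = (P \to F_1 \to \cdots \to F_k)$ I may choose the base point $x_0$ in the definition \eqref{exponential.weight} to be an integer vertex of the terminal face $F_k$. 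Then for integer $t$ and integer $\xi$, $\cE_{\gT}(t\xi) = e^{-2\pi i t\langle \xi, x_0\rangle} = 1$, so \eqref{complicatedcoeff} collapses to
\begin{equation*}
\tilde a_i \;=\; \sum_{l(\gT)=d-i}\ \lim_{\eps \to 0^+} \sum_{\xi \in \Z^d \cap S(\gT)} \cR_{\gT}(\xi)\, e^{-\pi \eps \|\xi\|^2}.
\end{equation*}

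The heart of the argument is a parity observation on $\cR_{\gT}$. Each single-edge weight $W_{(F,G)}(\xi)$ defined in \eqref{weight} has a linear numerator and a quadratic denominator in $\xi$, so $W_{(F,G)}(-\xi) = -W_{(F,G)}(\xi)$. Since $\cR_{\gT}$ is the product of $l(\gT) = d - i$ such weights together with the $\xi$-independent Hausdorff volume of $F_k$, it follows that $\cR_{\gT}(-\xi) = (-1)^{d-i}\, \cR_{\gT}(\xi)$. The admissible set $S(\gT)$ is $\xi \mapsto -\xi$-invariant (it is cut out by linear conditions), it excludes the origin whenever $l(\gT) \geq 1$ (since $\proj_{F_{k-1}}(0) = 0$), and the Gaussian factor is even in $\xi$. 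Performing the change of variables $\xi \mapsto -\xi$ in the inner sum therefore yields $\tilde a_i = (-1)^{d-i}\, \tilde a_i$, forcing $\tilde a_i = 0$ whenever $d - i$ is odd. Only indices $i$ with $i \equiv d \pmod 2$ survive, so $A_P$ is an even polynomial when $d$ is even and an odd polynomial when $d$ is odd.

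The only step that needs any care is the interchange of summation underlying the $\xi \mapsto -\xi$ pairing, but the Gaussian damping guarantees absolute convergence for every fixed $\eps > 0$, so both the change of variables and the subsequent $\eps \to 0^+$ limit go through without difficulty.
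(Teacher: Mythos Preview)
Your proof is correct. Both your argument and the paper's rely on the same first step: for an integer polytope and integer $t$, the exponential weights $\cE_{\gT}(t\xi)$ collapse to $1$, so each $a_i(t)$ becomes the $t$-independent lattice sum you display. This gives polynomiality in either approach; you route through Theorem~\ref{thm:Struc1}, while the paper reads it off directly from the iterated Stokes expansion, but the content is the same.

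Where the two arguments diverge is in killing the wrong-parity coefficients. The paper observes that each application of Stokes contributes a factor $\frac{-1}{2\pi i}$, so after $k=d-i$ iterations the coefficient $\tilde a_i$ equals $\bigl(\tfrac{-1}{2\pi i}\bigr)^{d-i}$ times a manifestly real lattice sum; when $d-i$ is odd this is purely imaginary, and since $A_P(t)$ is real by its definition as a sum of solid angles, the coefficient must vanish. Your argument instead exploits the oddness of each single-edge weight $W_{(F,G)}$ under $\xi\mapsto -\xi$, obtaining $\tilde a_i = (-1)^{d-i}\tilde a_i$ directly from the symmetry of the summation domain. Your route is slightly more self-contained, since it never needs to invoke the extrinsic fact that $A_P(t)\in\R$; the paper's route, on the other hand, makes transparent exactly which structural feature of the Stokes iteration (the accumulation of powers of $i$) is responsible for the parity pattern. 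The two mechanisms are of course linked: the real rational part of $\cR_{\gT}$ is odd of degree $d-i$, which is simultaneously what makes your symmetry work and what makes the paper's ``real sum times $i^{d-i}$'' factorization possible.
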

\begin{proof}
As was already noted in equation \eqref{TrivialExponentialTerms}, all of the exponential weights for all rooted chains are trivial, namely equal to $1$, because all faces are integer polytopes (of various dimensions).  Using the first iteration of our combinatorial Stokes' formula, namely \eqref{FirstIterationOfStokes}, and plugging it into Poisson summation, we have, from \eqref{lattice-set-up}:
\begin{align}\label{polynomiality}
A_P(t) & =   t^d   \vol(P)
+ \left( \frac{-1}{2 \pi i}  \right) 
 t^{d-1} \sum_{G \in \partial P} 
 \lim_{\eps\to 0^+} 
 \sum_{\xi\in\Z^d  \setminus \{0\}}
 \frac{\langle \xi, N_P(G) \rangle }{\|  \xi \|^2} \hat{G} (t \xi).
\end{align}
Note the appearance of the purely imaginary number $\frac{-1}{2 \pi i}$.    Now, when we iterate the combinatorial Stokes' formula again, this time applying it to
 $\hat G(t \xi)$, we obtain another linear combination of volumes of facets of $G$ (hence codimension $2$ faces of $P$), and another linear combination of the pure imaginary term $\frac{-1}{2 \pi i}$, multiplied by the real rational weights times the transforms of the codimension-$2$ faces of $P$.  When we iterate the combinatorial Stokes' formula all the way down to the vertices, we see that every coefficient of $t^{d-k}$ in this expansion will be purely imaginary if $k$ is odd, and every coefficient of $t^{d-k}$ will be real if $k$ is even.  Since the left-hand side of   \eqref{polynomiality} is real, by definition of $A_P(t)$, we arrive at the result.
\end{proof}

\begin{cor} \label{constant.term.vanishes}
Let $P$ be a $d$-dimensional integer polytope in $\R^d$.  Then the constant coefficient $a_0(t)$ in the solid angle sum $A_P(t)$, for positive integer values of $t$, is identically equal to zero. 
\end{cor}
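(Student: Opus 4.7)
The plan is to apply the Main Theorem~\ref{thm:main} at index $i=0$, which gives
\[
a_0(t) = \lim_{\eps \to 0^+} \sum_{l(\gT) = d} \sum_{\xi \in \Z^d \cap S(\gT)} \cR_{\gT}(\xi) \, \cE_{\gT}(t\xi) \, e^{-\pi\eps\|\xi\|^2}.
\]
The rooted chains with $l(\gT)=d$ are exactly the maximal flags terminating at the vertices of $P$. Since $P$ is an integer polytope, every such terminal vertex $v$ lies in $\Z^d$, and for any positive integer $t$ and $\xi \in \Z^d$ one has $\cE_{\gT}(t\xi) = e^{-2\pi i t \langle \xi, v\rangle} = 1$. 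Hence $a_0(t)$ is constant in integer $t$, and the task reduces to showing that the $t$-free lattice sum of pure rational weights $\cR_{\gT}(\xi)$ vanishes.

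When $d$ is odd, I would reuse the parity analysis that underlies the preceding corollary. Each weight $\cR_{\gT}(\xi)$ is a product of $d$ length-one weights together with the Hausdorff volume $1$ of the terminal vertex, so it carries the overall scalar prefactor $(-1/(2\pi i))^d$, which is purely imaginary when $d$ is odd, multiplied by a real-valued rational function of $\xi$. Consequently the whole regularized lattice sum defining $a_0$ is purely imaginary; but since $A_P(t)$ is real-valued, its coefficient $a_0$ must be real, which forces $a_0 = 0$.

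When $d$ is even the scalar $(-1/(2\pi i))^d$ is real, so the parity argument only yields $a_0 \in \R$ and a substantive further step is needed. My plan here is to bridge the Fourier-analytic formula for $a_0$ to a classical combinatorial identity on the face poset of $P$. Specifically, by applying Poisson summation in reverse to the vertex-terminated chain sum (after using a Brion-type vertex-cone decomposition $\hat{P}(\xi) = \sum_{v} \hat{K_v}(\xi)$ on integer frequencies, which holds because $P$ is an integer polytope and all vertex exponentials $\cE_{\gT}(\xi)$ collapse to $1$), one can reinterpret the limiting sum as the polynomial extension at $t = 0$ of Macdonald's face decomposition $A_P(t) = \sum_{F \le P} \om_P(F)\, L^{\circ}_F(t)$. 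Ehrhart--Macdonald reciprocity then supplies $L^{\circ}_F(0) = (-1)^{\dim F}$, and the Gram--Sommerville relation $\sum_{F \le P} (-1)^{\dim F}\, \om_P(F) = 0$, valid whenever $\dim P \ge 1$, closes the argument with $a_0 = 0$.

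The main obstacle is clearly the $d$ even case. The delicate point is the treatment of non-generic lattice vectors $\xi$ at which the iterated Stokes chain halts prematurely: such $\xi$ lie in $S(\gT)$ only for chains $\gT$ of length strictly less than $d$ and therefore contribute to the higher quasi-coefficients $a_i$ with $i > 0$ rather than to $a_0$. Making the Poisson/Gram--Sommerville bridge rigorous requires checking that these non-generic frequencies separate cleanly from the generic vertex-cone sum, so that the latter matches Macdonald's face decomposition at $t = 0$ up to contributions absorbed into $a_1, \dots, a_{d}$ and does not leak into $a_0$ itself.
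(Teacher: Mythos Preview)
Your proposal takes a genuinely different route from the paper, and while the ingredients are not wrong, the route is more circuitous and introduces a structural circularity relative to the paper's own development.

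The paper's proof is a direct edge-pairing argument that works uniformly for all $d$, with no case split on parity. Chains of length $d$ terminate at vertices, and any two maximal chains that agree except at the terminal node must end at the two endpoints $V_1,V_2$ of a common edge $E$. Since $N_E(V_1)=-N_E(V_2)$, the last weight factors satisfy $W_{(E,V_1)}(\xi)=-W_{(E,V_2)}(\xi)$, whence $\cR_{\gT_1}(\xi)=-\cR_{\gT_2}(\xi)$; moreover the two chains share the same admissible set $\R^d\setminus E^\perp$. With the exponential weights already trivialized (integer polytope, integer $t$), the contributions cancel in pairs and $a_0=0$. No external input beyond the Main Theorem is used.

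Your odd-$d$ case is correct, but it is really just the preceding parity corollary restated: for odd $d$ that corollary already says $A_P(t)$ is an odd polynomial, so $a_0=0$.

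Your even-$d$ case does eventually reach a valid conclusion---the face decomposition $A_P(t)=\sum_{F}\om_P(F)L^\circ_F(t)$, the fact $L^\circ_F(0)=(-1)^{\dim F}$, and the Brianchon--Gram identity together give $a_0=\sum_F(-1)^{\dim F}\om_P(F)=0$. Two remarks, however. First, the ``reverse Poisson / Brion vertex-cone / non-generic frequency'' apparatus you sketch is unnecessary: once the face decomposition is granted for positive integers $t$, reading off the constant term of the resulting polynomial is immediate, and none of the analytic worries you raise in your final paragraph actually arise. Second, and more importantly, the paper \emph{derives} the Brianchon--Gram relations (Corollary~\ref{GramRelations}) as a consequence of the present corollary. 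So within the paper's logical order your even-$d$ argument would be circular. The edge-pairing argument avoids this entirely and is the intended proof; it is also shorter and does not require knowing Brianchon--Gram in advance.
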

\begin{proof}
Again as noted in equation \eqref{TrivialExponentialTerms},  when $t$ is an integer all the exponential terms $\cE_{\gT}(t\xi)$ degenerate to $1$.   Moreover, we can pair up the two paths $\gT_1,\gT_2$ of length $n$ if they are almost the same except for their last nodes $V_1 \neq V_2$, which are 0-dimensional faces of $P$. This happens only when $V_1,V_2$ are the two ends of a common $1$-dimensional edge $E$ of $P$. The only difference in the weights of $\gT_1,\gT_2$ are the weights of the edges $(E,V_1)$ and $(E,V_2)$ of the graph $\GP$. The fact that $N_E(V_1) = -N_E(V_2)$ implies that, from definition \eqref{weight},
		\begin{align*}
			W_{(E,V_1)} (\xi) &= \frac{-1}{2 \pi i} \frac{\langle \proj_{E} (\xi), N_E(V_1) \rangle }{\| \proj_{F} (\xi) \|^2} \\
							&= - \frac{-1}{2 \pi i} \frac{\langle \proj_{E} (\xi), N_E(V_2) \rangle }{\| \proj_{F} (\xi) \|^2} \\
							&= - W_{(E,V_2)} (\xi).
		\end{align*}

\noindent 
Thus, $\cR_{\gT_1}(\xi) = -\cR_{\gT_2}(\xi)$. Moreover, $\gT_1$ and $\gT_2$ have the same admissible set $\R^d \setminus E^{\perp}$. Hence $W_{\gT_1}(\xi) = -W_{\gT_2}(\xi)$ for all $\xi$ in the admissible set of both rooted chains $T_1$ and $T_2$, and so the constant coefficient $a_0(t)$ is zero for positive integers $t$.
\end{proof}

Next, we give a third quick corollary of the main result, this time retrieving a reciprocity law for the solid angle polynomial of a real polytope, which appeared in \cite{desariorobins}.   Looking back at equation  \eqref{FourierSolidAngleSum} allows us to extend the domain of the  solid-angle sum to all nonzero real numbers $t \in \R$.  
 In other words,  we can extend the function $A_P(t)$, already defined for all positive reals $t$ in \eqref{FourierSolidAngleSum}, to include all nonzero 
 $t \in \R$:
\begin{equation}\label{extension}
A_P(t) :=  t^d \lim_{\eps\to 0^+} \sum_{\xi\in\Z^d} \hat{\One}_{P}(t\xi) 
e^{-\pi\eps\|\xi\|^2},
\end{equation}

\noindent 
We obtain the following reciprocity law, which appeared in \cite{desariorobins} for real polytopes.  The methods of \cite{desariorobins} also used the Poisson summation, but did not have a detailed description of the quasi-coefficients of $A_P(t)$. 

\begin{cor}[Reciprocity Law for Real Polytopes] \label{thm:Rec}
		Let $P$ be a full-dimensional real polytope in $\R^d$. Then the extended solid angle sum
	\eqref{extension} satisfies the functional identity:
\begin{equation}
A_P(-t) = (-1)^{\dim(P)} A_P(t),
\end{equation}
valid for all nonzero real numbers $t$.
\end{cor}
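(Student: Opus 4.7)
The plan is to derive the reciprocity law directly from the extended definition \eqref{extension}, exploiting the central symmetry of the integer lattice $\Z^d$ under the involution $\xi \mapsto -\xi$. Notably, the face-poset machinery of Theorem \ref{thm:main} is not needed here: the identity reflects a simple parity feature of the summand in the frequency domain, visible already before the Fourier transform is expanded into chain weights.

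The key steps are as follows. First, I would begin with
\[
A_P(-t) = (-t)^d \lim_{\eps \to 0^+} \sum_{\xi \in \Z^d} \hat{\One}_P(-t\xi) \, e^{-\pi \eps \|\xi\|^2}.
\]
Next, for each fixed $\eps > 0$ I would reindex the sum via $\xi \mapsto -\xi$. This involution is a bijection of $\Z^d$, preserves $\|\xi\|^2$, and transforms $\hat{\One}_P(-t\xi)$ into $\hat{\One}_P(t\xi)$. Thus, for each fixed $\eps > 0$,
\[
\sum_{\xi \in \Z^d} \hat{\One}_P(-t\xi) \, e^{-\pi \eps \|\xi\|^2} = \sum_{\xi \in \Z^d} \hat{\One}_P(t\xi) \, e^{-\pi \eps \|\xi\|^2}.
\]
Taking the limit $\eps \to 0^+$ and factoring out $(-t)^d = (-1)^d t^d$, I would conclude $A_P(-t) = (-1)^d A_P(t) = (-1)^{\dim(P)} A_P(t)$, since $P$ is full-dimensional.

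The only subtle point, which I would dispatch as a preliminary remark, is the validity of the reindexing inside the limit. Because the indicator function $\One_P$ has compact support we have the uniform bound $|\hat{\One}_P(\xi)| \leq \vol(P)$, so for each fixed $\eps > 0$ the Gaussian factor makes the lattice sum absolutely convergent, and reordering over the involution is unproblematic. I therefore do not expect a genuine obstacle here; the result is essentially a symmetry property of the defining formula \eqref{extension}, which is precisely what made it natural to extend $A_P$ to negative dilates in the first place.
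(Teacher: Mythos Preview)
Your proof is correct and follows essentially the same approach as the paper: both start from the extended definition \eqref{extension}, substitute $m=-\xi$ to exploit the symmetry of $\Z^d$, and read off the factor $(-1)^d$. Your added remark on absolute convergence (via $|\hat{\One}_P|\le \vol(P)$ and the Gaussian damping) is a nice explicit justification of a step the paper leaves implicit.
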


	\begin{proof}
		Equation (\ref{eq:DI}) gives us the following functional identity, valid for each nonzero real number $t$:
		\begin{align*}
		A_P(-t) &= (-t)^d \lim_{\eps\to 0^+} \sum_{\xi\in\Z^d} \hat{\One}_{P}(-t\xi) e^{-\pi\eps\|\xi\|^2} \\
	&= (-1)^d  t^d \lim_{\eps\to 0^+} \sum_{m \in\Z^d} \hat{\One}_{P}(tm) e^{-\pi\eps\| m \|^2} \\
	&= (-1)^d  A_P(t),
		\end{align*}
		where $m = - \xi$ was used in the penultimate equality.
\end{proof}

To state the next very classical result, we need to define 
the solid angle of a face $F$ of $P$: it is defined by choosing any point $x$ in the relative interior of $F$ and setting $\omega_P(F) := \omega_P(x)$.   For example, a $1$-dimensional edge $E$ of a $3$-dimensional polytope $P$ has a solid angle associated to it which happens to be equal to the dihedral angle that $E$ makes with its two adjoining facets of $P$.

\begin{cor}[Brianchon-Gram relations] \label{GramRelations}
For a rational convex polytope $P$, we have 
\begin{equation}
\sum_{F \subset P} (-1)^{\dim F} \omega_P(F) = 0.
\end{equation}
\end{cor}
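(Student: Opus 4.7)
The plan is to compute the polynomial $A_P(t)$ at $t=0$ in two complementary ways and equate the results. By rescaling, I may assume $P$ is an integer polytope: for a general rational polytope, apply the integer case to $nP$ with $n$ chosen so that $nP$ is integral, using $\omega_{nP}(nF)=\omega_P(F)$ to descend the identity to $P$.

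The first step is a face-by-face decomposition of the solid-angle sum, valid for positive integers $t$: every lattice point in $tP$ lies in the relative interior of a unique face $tF$ of $tP$, and the solid angle there equals $\omega_P(F)$ by the translation- and dilation-invariance of solid angles. Collecting terms yields the polynomial identity
\[
A_P(t) \;=\; \sum_{F \subset P}\omega_P(F)\,L_F^{\circ}(t),
\]
where $L_F^{\circ}(t)=|\mathrm{relint}(tF)\cap\Z^d|$ denotes the interior Ehrhart polynomial of the face $F$. Both sides are polynomials in $t$ (of degree $d$ on the left by Macdonald's theorem, and of degree $\dim F$ on each summand on the right by Ehrhart's theorem), so the identity extends as a polynomial identity to all $t \in \R$.

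The second step is to evaluate this polynomial identity at $t=0$. On the right-hand side, classical Ehrhart--Macdonald reciprocity applied to each face gives $L_F^{\circ}(0) = (-1)^{\dim F}$, producing
\[
A_P(0) \;=\; \sum_{F \subset P}(-1)^{\dim F}\,\omega_P(F).
\]
On the left-hand side, the Main Theorem \ref{thm:main} writes $A_P(t)=\sum_{i=0}^d a_i(t)\,t^i$, so $A_P(0)=a_0(0)$. Corollary \ref{constant.term.vanishes} shows that $a_0(t)=0$ for every positive integer $t$, and the periodicity Theorem \ref{thm:Struc1} then forces $a_0(0)=0$. Equating the two evaluations gives the Brianchon--Gram identity.

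The main obstacle is the somewhat delicate interplay between the polynomial interpretation of $A_P(t)$ (valid for all real $t$ after extension) and the lattice-counting interpretation (originally valid only for positive integers $t$). One must justify that both sides of the face-decomposition identity are genuinely polynomial in $t$, so that equality of polynomials on the positive integers permits evaluation at $t=0$; and the Ehrhart--Macdonald reciprocity invoked on each individual face, while classical, should be cited explicitly for completeness.
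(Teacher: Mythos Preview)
Your proof is correct and follows essentially the same approach as the paper: reduce to the integer case, invoke the face decomposition $A_P(t) = \sum_{F} \omega_P(F)\, L_{F^\circ}(t)$, and compare constant terms using Corollary~\ref{constant.term.vanishes} together with the classical fact that $L_{F^\circ}(0) = (-1)^{\dim F}$. Your detour through Theorem~\ref{thm:Struc1} to conclude $a_0(0)=0$ is unnecessary, since comparing constant coefficients of two polynomials that agree on all positive integers already gives the result directly, without any need to evaluate at $t=0$ or invoke periodicity.
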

\begin{proof}
First we note that the solid angle of each face remains invariant under dilations, so that we may assume that $P$ is now an integer polytope, after an appropriate integer dilation of the given rational polytope.   In this case we now know that $A_P(t)$ is a polynomial in the integer parameter $t$, and we also have the Ehrhart polynomial of the relative interior of any face F of $P$, which we call $L_{F^o}(t)$.  We make use of the elementary but useful relation (see \cite{BeckRobins}, Lemma 13.2) between the Solid angle polynomial of $P$ and the Ehrhart polynomials of the relatively open faces of $P$:
\begin{equation}\label{SolidAngle.Ehrhart}
A_P(t) = \sum_{F \subset P} \omega_P(F) L_{F^o}(t).
\end{equation}
Considering the constant terms of all polynomials in $t \in \Z$ on both sides of \eqref{SolidAngle.Ehrhart},  using the fact that the constant term of each $L_{P^o}(t)$ is equal to $(-1)^{\dim F}$, and using our Corollary \ref{constant.term.vanishes} (the constant term of the solid angle polynomial always vanishes for integers $t$), we have proved the Brianchon-Gram identity. 
\end{proof}


\bigskip
\section{Further remarks}\label{remarks}

In the 1970's, Peter McMullen \cite{McMullen1, McMullen2}  found proofs, and generalizations, of the structural properties of Macdonald's solid angle polynomial, using the theory of valuations.  The proofs tend to be easier, and are beautifully general, once the valuations are developed.  A more precise formulation, and algorithms for, the quasi-coefficients of $A_P(t)$ still appears to be out of reach, though, using valuations alone.  Lawrence \cite{lawrence} has helped develop the theory of valuations as well, and has contributed to the computational complexity analysis of the volumes of polytopes.    

Sometimes the relationship \eqref{SolidAngle.Ehrhart} allows us to transfer information from Ehrhart polynomials to solid angle polynomials, but we note that in practice much is left to be desired from such a simple dictionary, due to the immense complexity involved in computing all of the Ehrhart polynomials $L_{F^o}(t)$ for \emph{all} faces of $P$.  
General relations between solid angles of the faces of a polytope (as in the Brianchon-Gram relations) were studied by Micha Perles and Geoffrey C. Shepard \cite{perles}, \cite{shephard}.  Perhaps the earliest account of relations between solid angles of faces of a polytope were given by the great 19'th century geometer Schl\"afli \cite{schlafli}.  In the early 1900's  Sommerville \cite{sommerville} took up this study.  More recently, further generalizaions were obtained by Kristin Camenga  \cite{camengasolidangles}.

Although formula \eqref{complicatedcoeff}, for the quasi-coefficients, looks somewhat complicated, in low dimensions it is indeed possible to push the computations of these lattice sums through, as has recently been accomplished in \cite{Nhat.Sinai}, for $\R^2$.  

We note that while Theorem \ref{thm:main} is valid for any real polytope, it might be tempting to conclude that the quasi-coefficients $a_i(t)$ are periodic for any real polytope, but this conclusion is false in general.  Indeed it is  a very difficult question to find the order of growth of $a_i(t)$ for a general real polytope and is a fascinating open question. 

Regarding the computation of one solid angle in $\R^d$, Ribando \cite{ribando} gave an interesting method, via hypergeometric series, to measure one solid angle.  The optimal computational complexity of computing one solid angle of even a simplicial cone in $\R^d$ is an open problem. 


Further complexity considerations for the computation of Ehrhart polynomials and fixed Ehrhart coefficients were carried out initially by Barvinok in \cite{barvinok2}, with an exposition in \cite{barvinok3}.  Barvinok's algorithm shows that, for fixed dimension $d$, it is possible to compute the number of integer points in a rational polytope in polynomial-time, as a function of the bit-capacity of the coordinates of the polytope.  

In \cite{BeckRobinsSam}, the authors study solid angle polynomials from the perspective of analogues of Stanley's non-negativity results of the Ehrhart series and obtain similar results.    In \cite{desariorobins}, the authors study solid angle polynomials and extend their domain to all real numbers, but do not obtain the precise formulations that we obtain here for the quasi-coefficients of the solid angle polynomial. 

The work of Pommersheim \cite{pommersheim} from $1993$ initiated a study of Ehrhart polynomials of integer polytopes through the interesting use of Todd classes of Toric varieties.    We continue to see many works on these relationships (see \cite{morelli} as well), and the interested reader may also consult Fulton's book \cite{fulton}, Danilov's survey paper \cite{danilov}, or a survey paper by Barvinok and Pommersheim \cite{barvinokpommersheim}  for a dictionary between some of the enumerative geometry of polytopes (including lattice point enumeration) and some algebraic properties of Toric varieties.

Yau and Zhang  \cite{YauZhang} found an interesting upper bound on the number of lattice points in any real right-angled simplex in $\R^d$, and used this bound to prove the difficult Durfree conjecture in algebraic geometry, for isolated weighted homogeneous singularities.  Ehrhart polynomials of simplices also find applications in the famous ``linear diophantine problem of Frobenius'', also known as the money-exchange problem \cite{BeckDiazRobins}, \cite{BeckRobinsPolygons}, \cite{kannan}, \cite{shallit} in number theory.   The formulas in \cite{BeckDiazRobins}, \cite{BeckRobinsPolygons} gave rise to Ehrhart quasi-polynomials of rational simplices, whose `atomic pieces' are composed of generalized Dedekind sums, a vast and fascinating area of number theory, topology, and pseudo-random number generators  \cite{Knuth}.

Skriganov \cite{Skriganov1, Skriganov2, Skriganov3} has studied the difference between the volume of a real convex polytope $P$ and its lattice-point enumerator 
$rP \cap \mathcal L$, for various lattices $\mathcal L$.   His methods include the Poisson summation formula, but take a very different route than our methods.  In particular, Skriganov uses an ergodic approach on the space of lattices 
$SL_d(\R) / SL_d(\Z)$.   He proves that for large classes of real polytopes $P$, the error term 
$|rP \cap \mathcal L| -  (\vol P) r^d$ is extremely small, on the order of  
$O(\log r)^{d-1 + \epsilon }$. 

Regarding the combinatorial Stokes' formula of Barvinok, from \cite{barvinok1}, the difference between Barvinok's formula and our combinatorial Stokes' formula is that he uses a complex parameter which renders the denominators non-vanishing.  Although this appears to be a desirable feature, it is not at all clear how one might derive the grading that we have found (or if it is possible), namely the quasi-coefficient formulas, using Barvinok's combinatorial Stokes' formula. This might be an interesting avenue for further research.  
	
In the field of discrete optimization, Ehrhart quasi-polynomials play a prominent role as well now - see \cite{deloerahemmeckekoeppe}.
Such questions are natural, given the need to optimize linear functionals over the integer lattice, as in integer linear programming \cite{schrijver}.   In the analysis of lattices and their invariants, solid angles can be used to measure how `short' a basis of a lattice can optimally be \cite{fukshanskyrobinssolidangle}.

Recently, the work of Stapledon \cite{stapledonadditive, stapledonweightedehrart} and Stapledon and Katz \cite{Katz.Stapledon} used ``local h-vectors'', which is based on Richard Stanley's subdivision theory \cite{Stanley1}, and has found very nice applications to the solution of various open problems concerning unimodality questions in enumerative geometry, and mixed hodge numbers.  The usual h-vectors are formed by the numerator polynomial of the Ehrhart series, which is a change-of-basis of the Ehrhart polynomial, so these recent advances are extensions of Ehrhart polynomials.

The recent work of Eva Linke \cite{linke} on Ehrhart polynomials shows a remarkable property of the Ehrhart coefficients:  they satisfy a linear ordinary differential equation when passing from the $k$'th quasi-coefficient to the $(k+1)$'st quasi-coefficient.   One might wonder if it is possible to use such an ODE to compute Ehrhart coefficients more efficiently, but such a direction still appears to be out of reach, perhaps due to the difficulty in finding the requisite initial conditions for these ODE's. 

In the field of tiling and multi-tiling, solid angles play a role \cite{gravinrobinsshiryaev} in giving an equivalent condition for a rational polytope $P$ to be able to multi-tile $\R^d$ by translations with a lattice (and more general sets).  This condition is essentially equivalent to saying that the sum of the solid angles of $P$, taken at all integer points, equals the volume of $P$, and this hold for all translations of $P$.   There are many open problems in this area (see \cite{gravinrobinsshiryaev} for some open problems).

Going back further in time, in 1922 Hardy and Littlewood wrote about    \cite{HardyLittlewood} ``Some problems of Diophantine approximation: The lattice-points of a right-angled triangle'', and they used weights of $1/2$ for the lattice points on the boundary of their triangles, which means they considered the solid angle sum, in an apparently ad-hoc way, predating the results of Macdonald from the 1960's. 

Payne \cite{payne}  has studied combinatorial relationships between the Ehrhart series of lattice polytopes and various conjectures of Hibi, Stanley, and others regarding unimodality of the $h$-polynomials, for Ehrhart series.   Such questions are again related to whether or not a polytope is reflexive, and hence are related to questions of Batyrev.  

The work of Batyrev \cite{Batyrev} on mirror symmetry for Calabi-Yau hypersurfaces in toric varieties shows how useful Ehrhart theory can be for the study of relations between some integer polytopes, called reflexive polytopes, and their duals.  A reflexive polytope may be defined (there are many equivalent but apriori distinct definitions) as an integer polytope such that between any two of its consecutive integer dilations, there are no other integer points (see \cite{BeckRobins}). 
There are therefore useful relationships with Ehrhart polynomials.   The study of reflexive polytopes led Batyrev to discover a duality between two different types of Calabi-Yau manifolds, and thus found further applications in mathematical physics.


\bigskip

\end{document}